\def\prob#1{\mathbb{P}\left[ #1 \right]}
\def\zn{\mathbb{Z}_n}
\def\D{D}
\def\b#1{\mathbf{b}_{#1}}
\def\auto{\mathcal{A}_n(\mathbf{b})}
\def\RU{\mathcal{E}_{\mathrm{\small row}} (\alpha)}
\def\cRU{\mathcal{E}_{\mathrm{\small row}}^c (\alpha)}
\def\cRUstar{\mathcal{E}_{\mathrm{\small row}}^c (\alpha^{\star})}
\def\RD{\mathcal{E}_{\mathrm{\small zero}} (\beta)}
\def\cRD{\mathcal{E}_{\mathrm{\small zero}}^c (\beta)}
\def\cRDstar{\mathcal{E}_{\mathrm{\small zero}}^c (\beta^{\star})}
\def\nmed{\left\lfloor\frac{n}{2}\right\rfloor}
\def\db#1#2{\big|b_{#1}-b_{(#1+#2)_n}\big|_n}
\def\A#1{\mathcal{R}^{\alpha}_{#1}(\mathbf{b})}
\def\auto{\mathcal{A}_n}
\def\b#1{\mathbf{b}_{#1}}
\def\var#1{\mathbb{V}\left[#1\right]}
\def\dist#1{\big| #1 \big|_n}
\def\E#1{\mathbb{E}\left[ #1 \right]}
\def\cov#1{\mbox{Cov} \left[ #1 \right]}
\def\P#1{P_{#1}(n)}\def\E#1{\mathbb{E}\left[ #1 \right]}
\def\indi#1{\mathds{1}\left\{ #1 \right\}}
\def\Z{\mathcal{Z}}
\newcommand{\Mod}[1]{\ (\mathrm{mod}\ #1)}
\def\med{\frac{n}{2}}
\def\Tb{T_{\b{}}}
\def\largo{1.0}
\def\porcion{0.1}
\def\sumOne{\substack{1 \leq i,r\leq \nmed \\0 \leq j ,  j', s, s' \leq n-1\\
j < j' ;\, s < s'	
\\(i,j,j') \neq (r,s,s')}}
\def\sumOneEven{\substack{1 \leq i,r \leq \med \\ 0\leq j, j', s, s'\leq n-1\\
j<j';s<s'\\
j \not \equiv_{ \text{\tiny$\frac{n}{2}$}} j' ;\, s \not \equiv_{\text{\tiny$\frac{n}{2}$}} s'\\ (i,j,j') \neq (r,s,s')}}
\newcommand{\autor}[3]{#1}
\def\Cerny{\v{C}ern\`y }
\def\A{\mathcal{A}}
\newtheorem{thm}{Theorem}
\newtheorem{lemma}[thm]{Lemma}
\newtheorem{claim}[thm]{Claim}
\newtheorem{prop}[thm]{Proposition}
\theoremstyle{remark}
\newtheorem{remark}[thm]{Remark}
\newtheorem*{remark*}{Remark}
\newtheorem*{defi}{Definition}
\newtheorem{conj}[thm]{Conjecture}
\title{ Circular automata synchronize with high probability}
\author{
\autor{Christoph Aistleitner\thanks{Institute of Analysis and Number Theory, TUGraz, Austria. aistleitner@math.tugraz.at}}{}{}
\and
\autor{Daniele D'Angeli\thanks{Universit\`a Niccol\`o Cusano, Via don Gnocchi Roma, Italia. daniele.dangeli@unicusano.it}}{}{}
\and
\autor{Abraham Gutierrez\thanks{Institute of Discrete Mathematics, TUGraz, Austria. \{a.gutierrez, rosenmann\}@math.tugraz.at}}{}{}
\and
\autor{Emanuele Rodaro\thanks{Department of Mathematics, Politecnico di Milano, Italia. emanuele.rodaro@polimi.it}}{}{}
\and
\autor{Amnon Rosenmann$^{\ddag}$}{}{}
}
\begin{document}
\date{}

	\maketitle	
	\begin{abstract}
		 In this paper we prove that a uniformly distributed random circular automaton $\auto$ of order $n$ synchronizes with high probability (w.h.p.). More precisely, we prove that
		$$
		\prob{\auto \text{ synchronizes}} = 1- O\left(\frac{1}{n}\right).
		$$
		The main idea of the proof is to translate the synchronization problem into a problem concerning properties of a random matrix; these properties are then established with high probability by a careful analysis of the stochastic dependence structure among the random entries of the matrix. Additionally, we provide an upper bound for the probability of synchronization of circular automata in terms of chromatic polynomials of circulant graphs.
	\end{abstract}
{\small \textbf{\textit{Keywords:}} Automata; Synchronization; Random Matrices; Circulant Graphs; Chromatic Polynomials.}
\section{Introduction}
 A \textit{complete deterministic finite automaton} (DFA) is a tuple $\mathcal{A} =(Q,L)$, where  $Q:=\{q_1,q_2,\ldots,q_n\}$ is a finite set of \textit{states} and $L := \{\mathbf{a_1},\mathbf{a_2},\ldots,\mathbf{a_k}\}$ is a finite set of mappings $\mathbf{a_i}:Q\rightarrow Q$, where $\mathbf{a}(q)=q'$ is also written as $q \mathbf{a} = q'$, $q,q' \in Q$, $\mathbf{a} \in L$. 
 The number of states $n$ is the \textit{order} of $\mathcal{A}$.
 Each $\mathbf{a_i}$ is called a \textit{letter} and a sequence $\mathbf{w} = \mathbf{a_{i_1}a_{i_2}}\ldots \mathbf{a_{i_r}} \in L^*$ is a \textit{word} of \textit{length} $r$.
 The action of $L$ on $Q$ naturally extends to an action of $L^*$ on $Q$, defined recursively by $ q (\mathbf{aw}) = (q \mathbf{a}) \mathbf{w}$, $q\in Q$, $\mathbf{a}\in L$, $\mathbf{w} \in L^*$. This action further extends to an action of $L^*$ on subsets of $Q$ by $\{q_{i_1},q_{i_2},\ldots,q_{i_k}\}\mathbf{w}=\{q_{i_1}\mathbf{w},q_{i_2}\mathbf{w},\ldots,q_{i_k}\mathbf{w}\}$.
 We say that the subset $S=\{q_{i_1},q_{i_2},\ldots,q_{i_k}\}\subseteq Q$ \textit{synchronizes} if there exists a word $\mathbf{w} \in L^*$ such that $q_{i_1}\mathbf{w} = q_{i_2}\mathbf{w} = \ldots = q_{i_k}\mathbf{w}$ (equivalently, we say that $\mathbf{w}$ synchronizes $S$).
 If the set $Q$ synchronizes then we say that $\mathcal{A}(Q,L)$ \textit{synchronizes} (or that it is a synchronizing automaton).
 A word $\mathbf{w} \in L^*$ that synchronizes $Q$ is called a \textit{synchronizing} (or \textit{reset}) word of $\A$.
 
 The following simple criterion for synchronization is well known and plays a crucial role throughout the paper:
\begin{claim}\label{claim:SynchroEquivalence}
	$\mathcal{A}= (Q,L)$ synchronizes $\iff$ every pair of states $q,q'\in Q$ synchronizes.
\end{claim}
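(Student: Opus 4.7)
The forward direction is immediate: if $\mathbf{w}$ synchronizes $Q$, then $q\mathbf{w}=q'\mathbf{w}$ for every pair $q,q'\in Q$, so the same word $\mathbf{w}$ synchronizes every pair. My plan for the reverse direction is a straightforward induction on the size of the subset being synchronized. More precisely, I would prove the stronger statement that, under the hypothesis that every pair synchronizes, every subset $S\subseteq Q$ synchronizes; specializing to $S=Q$ gives the claim.

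The induction is on $k=|S|$. The base case $k=1$ is trivial (the empty word works) and $k=2$ is exactly the hypothesis. For the inductive step, assume every subset of size at most $k$ synchronizes and let $S\subseteq Q$ with $|S|=k+1\geq 3$. Pick any two distinct states $q,q'\in S$. By hypothesis there is a word $\mathbf{w}\in L^*$ with $q\mathbf{w}=q'\mathbf{w}$, and consequently
\[
|S\mathbf{w}|=|\{r\mathbf{w}:r\in S\}|\leq |S|-1=k,
\]
because the two distinct states $q,q'$ collapse into a single image. The inductive hypothesis applied to the subset $S\mathbf{w}\subseteq Q$ yields a word $\mathbf{v}\in L^*$ synchronizing $S\mathbf{w}$, and then $\mathbf{w}\mathbf{v}$ synchronizes $S$ because $S(\mathbf{w}\mathbf{v})=(S\mathbf{w})\mathbf{v}$ is a singleton by construction.

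Taking $S=Q$ closes the argument. There is no real obstacle here: the only delicate point is to notice that the word $\mathbf{w}$ used to collapse the chosen pair $\{q,q'\}$ may send the remaining states of $S$ to arbitrary (possibly new) states, but this does not matter because the inductive hypothesis applies uniformly to every subset of $Q$ of size at most $k$, not only to subsets of the original $S$. In particular one uses crucially that $Q$ is closed under the action of $L^*$, so that $S\mathbf{w}$ is again a subset of $Q$ to which the inductive hypothesis can be applied.
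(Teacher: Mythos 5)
Your proof is correct and takes essentially the same approach as the paper: the paper sketches the reverse direction as repeatedly concatenating pair-synchronizing words until a single state remains, and your induction on the subset size is simply a careful formalization of that concatenation argument.
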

\begin{proof}
	It is clear that if $Q$ synchronizes by a reset word $\mathbf{w}$ then $\mathbf{w}$ synchronizes every pair of states of $Q$.
	Conversely, a reset word for $Q$ can be formed by concatenating words $w_i$ that synchronize pairs of states until we end up with a single state.
\end{proof}
The synchronization property may be described in terms of the graph representation of $\mathcal{A}$. The set $Q$ of states comprises the vertices of the graph and for each pair of states $q,q'$ and a letter $\mathbf{a} \in L$ such that $q\mathbf{a}=q'$ there is an arrow 
$(q,q')_{\mathbf{a}}$ labeled with $\mathbf{a} \in L$ and connecting $q$ to $q'$.
Each $q\in Q$ and $w = \mathbf{a_{i_1}a_{i_2}}\ldots \mathbf{a_{i_k}} \in L^*$ defines a directed path $$
\gamma(q,\mathbf{w}) : = ((q,q_{i_1})_{\mathbf{a_{i_1}}}, (q_{i_1},q_{i_2})_{\mathbf{a_{i_2}}}, \ldots, (q_{i_{k-1}},q')_{\mathbf{a_{i_{k}}}})
$$
that begins in $q$ and ends in $q'=q\mathbf{w}$. $\mathcal{A}$ then synchronizes if and only if there is a word $\mathbf{w}$, such that the paths $\{\gamma(q,\mathbf{w}) : q \in Q\}$ have a common endpoint $q'$, that is, the word $\mathbf{w}$ acts on $Q$ as the constant mapping.\\

Synchronizing automata have been intensely studied by theoretical computer scientists as well as pure mathematicians since the 1960's; see \cite{volkov2008synchronizing} for a detailed introduction on synchronization of automata. A driving force in this research field is the \Cerny conjecture.
\begin{conj}[The \Cerny conjecture]
	A synchronizing automaton $\mathcal{A}$ of order $n$ has a shortest synchronizing word of length at most $(n-1)^2$.
\end{conj}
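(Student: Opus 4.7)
The plan is to reduce to an iterated shrinking problem via Claim \ref{claim:SynchroEquivalence}. Starting from $S_0 = Q$ of size $n$, I would produce a sequence of words $\mathbf{w}_1, \mathbf{w}_2, \ldots, \mathbf{w}_{n-1}$ with $|S_i| := |S_{i-1} \mathbf{w}_i| < |S_{i-1}|$ and concatenate them to obtain a reset word. The target bound $(n-1)^2$ forces $\sum_{i=1}^{n-1} |\mathbf{w}_i| \leq (n-1)^2$, i.e.\ an average of $n-1$ letters per shrinking step. The problem thus reduces to a quantitative \emph{shrinking lemma}: for every $S \subseteq Q$ of size $k \geq 2$ in a synchronizing $\mathcal{A}$ of order $n$, there exists a word $u$ of length at most $\ell(n,k)$ with $|Su| < |S|$, satisfying $\sum_{k=2}^{n} \ell(n,k) \leq (n-1)^2$.

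The classical Frankl--Pin approach proves such a lemma via \emph{set extensions}: given $S$ of size $k$, one seeks a larger set $T$ of size $k+1$ and a short word $u$ whose action on $T$ produces a set containing $S$; a careful double-counting over pair-preimages yields $\ell(n,k) \leq \binom{n-k+2}{2}$. Summing gives the cubic bound $(n^3 - n)/6$, and subsequent refinements (Klyachko--Rystsov--Spivak, Trahtman, Szyku\l{}a) have only improved the leading constant, leaving the order $n^3$.

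The main obstacle is precisely the gap between this cubic estimate and the conjectured quadratic one. The extremal \Cerny automata attain $(n-1)^2$ exactly, so any successful proof must be tight infinitely often and cannot afford the $\Theta(k^2)$ slack built into the Frankl--Pin extension step. No amortized or algebraic argument currently available removes this slack uniformly over all $S$, which is why the conjecture---stated by \Cerny in 1964---remains open in full generality. The bound is however known for several restricted families (Dubuc's theorem for circular automata whose cyclic letter has prime length, Kari's theorem for Eulerian automata, B\'eal--Perrin for one-cluster automata, and so on). A realistic plan of attack would combine a refined Frankl--Pin analysis with structural information about the strongly connected components of the underlying graph and the spectral structure of the transition matrices, aiming for an amortized bound that exploits the dependence between successive shrinking steps; but settling the conjecture would require a genuinely new idea going beyond the present toolkit.
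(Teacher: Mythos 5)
You have not produced a proof, and you are right not to have: the statement you were given is the \Cerny conjecture itself, which the paper states as a \emph{conjecture} and does not prove (nor claim to prove). Its role in the paper is purely as motivation for studying synchronization of circular automata probabilistically. So there is no ``paper's own proof'' to compare against, and your write-up is best read as a correct diagnosis of why no proof exists rather than as an argument with a fixable gap.

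That diagnosis is accurate as far as it goes. The reduction via Claim~\ref{claim:SynchroEquivalence} to $n-1$ shrinking steps is the standard first move, and the crux is exactly the quantitative shrinking lemma you isolate: one would need words of average length $n-1$ per step, i.e.\ $\sum_{k=2}^{n}\ell(n,k)\leq(n-1)^2$, whereas the Frankl--Pin extension argument only gives $\ell(n,k)=O((n-k)^2)$ per step and hence a cubic total. That gap between $\Theta(n^3)$ and $(n-1)^2$ is precisely what has resisted all attempts since 1964; the recent improvements (e.g.\ Szyku\l{}a, Shitov, cited in the paper) lower only the constant in front of $n^3$. Two small factual corrections to your survey: Dubuc's theorem, as cited in this paper, establishes the conjecture for all circular automata (those with a letter acting as a single $n$-cycle), not only those of prime order --- the prime-order results of Perrin and Pin concern synchronizability and rank, not the word-length bound; and the one-cluster bound of B\'eal--Berlinkov--Perrin is $O(n^2)$ rather than exactly $(n-1)^2$. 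None of this changes your conclusion: the statement remains open, and your proposal correctly stops short of claiming otherwise.
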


 The bound in the \Cerny conjecture is tight: in \cite{cerny1964poznamka} \Cerny provided a series of synchronizing circular automata $C_2,C_3,\ldots$, such that $C_n$ has order $n$ and its shortest synchronizing word is of size exactly $(n-1)^2$ (see Fig. \ref{cerny automaton}). Furthermore, the \Cerny series of circular automata  $C_2,C_3,\ldots$ is the only known infinite series of automata whose shortest synchronizing words are of length $(n-1)^2$ \cite{VolkovExtremeAutomata}.
\begin{figure}[ht]
	\begin{center}
		\begin{tikzpicture}[>=latex, shorten >=1pt, shorten <=1pt]
			\tikzstyle{normal_node}= [draw,circle,inner sep=0pt,thick,minimum size=0.8cm]
			\draw (0,0) node [normal_node] (n-3) {$n{-}3$};
			\draw (2,0) node [normal_node] (3) {$3$};
			\draw (3.25,1.56) node [normal_node] (2) {$2$};
			\draw (2.8,3.51) node [normal_node] (1) {$1$};
			\draw (1,4.38) node [normal_node] (0) {$0$};
			\draw (-0.8,3.51) node [normal_node] (n-1) {$n{-}1$};
			\draw (-1.25,1.56) node [normal_node] (n-2) {$n{-}2$};
			\path (0) edge [->, bend left=10, red, thick] node[above] {$a$} (1)
	 		(1) edge [->, bend left=10, red, thick] node[right] {$a$} (2)
	 		(2) edge [->, bend left=10, red, thick] node[right] {$a$} (3)
 	 		(3) edge [->, bend left=10, red, dashed, thick] node[below] {$a$} (n-3)
 	 		(n-3) edge [->, bend left=10, red, thick] node[left] {$a$} (n-2)
 	 		(n-2) edge [->, bend left=10, red, thick] node[left] {$a$} (n-1)
 	 		(n-1) edge [->, bend left=10, red, thick] node[above] {$a$} (0)
 	 		(n-1) edge [->, bend right=10, blue, thick] node[below] {$b$} (0)
	 		(0) edge[->, out=110, in=70, distance=1.1cm, blue, thick] node[above] {$b$} (0)
	 		(1) edge[->, out=60, in=20, distance=1.1cm, blue, thick] node[right] {$b$} (1)
	 		(2) edge[->, out=10, in=330, distance=1.1cm, blue, thick] node[right] {$b$} (2)
	 		(3) edge[->, out=315, in=275, distance=1.1cm, blue, thick] node[below] {$b$} (3)
	 		(n-3) edge[->, out=265, in=225, distance=1.1cm, blue, thick] node[below] {$b$} (n-3)
	 		(n-2) edge[->, out=210, in=170, distance=1.1cm, blue, thick] node[left] {$b$} (n-2);
		\end{tikzpicture}
		\caption{The automaton $C_n$}
	\label{cerny automaton}
	\end{center}
\end{figure}
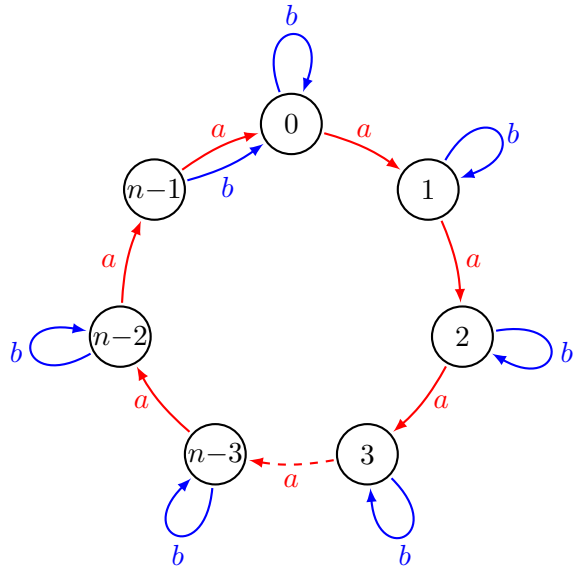
 The  best known general upper bounds for the size of shortest synchronizing words of an automaton with $n$ states are of order $O(n^3)$ \cite{pin1983CernyBound}\cite{szykula2017improvingCernyBound}\cite{Shitov2019}. Nevertheless, there are many classes of automata for which the \Cerny conjecture has been established (see \cite{volkov2008synchronizing} for examples).

In last decade probabilistic approaches to the synchronization problem have been developed. Typical questions in this setting are: let $\A(\{0,1,\ldots,n-1\},L)$ be a uniformly chosen DFA with $k$ letters on a certain probability space, is it true that with high probability the automaton $\A(\{0,1,\ldots,n-1\},L)$ is synchronizing? Does the \v{C}ern\`y conjecture hold with high probability? Here we give a (non-comprehensive) list of recent achievements in this probabilistic setting:
\begin{enumerate}
	\item[$\bullet$] In \cite{skvortsov20100LargeAlphabetSynchronization} the authors study random automata $\A$ where the number of letters $k$ grow together with $n$. In particular, they prove that $\A$ synchronizes w.h.p. when $k(n)$ grows fast enough;
	\item [$\bullet$] In \cite{Berlikov2013Synchronization} the author proves that $\prob{\A \text{ synchronizes}} = 1-O(n^{-k/2})$, for arbitrary $k\geq2$, and $\prob{\A \text{ synchronizes}} = 1-\Theta(1/n)$ for $k=2$;
	\item[$\bullet$] In \cite{nicaud2014fast} the author proves that $\A$ admits w.h.p. a synchronizing word of length $O(n\log^3 n)$ for arbitrary $k\geq 2;$
	\item[$\bullet$] In \cite{Nicaud2018AlmostGroupAutomata} the authors prove that if $\A$ is uniformly chosen among the strongly-connected almost-group automata then $\A$ synchronizes with probability $1-\Theta((2^{k-1}-1)n^{-2(k-1)})$ for arbitrary $k\geq 2$.
\end{enumerate}

Since the sequence of circular automata $C_n$ depicted in Fig. \ref{cerny automaton} is the only known infinite series of synchronizing automata reaching \Cerny's bound $(n-1)^2$, one might suspect that the class of circular automata is somehow difficult to synchronize. However, as we show in the present paper, it turns out that a random circular automaton is synchronizing with high probability.

The rest of the paper is organized as follows: in Section~\ref{sec:MainResult} we present the main result together with its proof and the statement of the two key lemmas for the proof. In Section~\ref{sec:DependenceStructureTb} we study the dependence structure among the entries of the random matrix used in the proof of the main result; the result obtained in this section is crucial for the proof of the key lemmas. In Section~\ref{sec:ProofLemmaRows} we prove the first lemma while in Section~\ref{sec:ProofLemmaZeros} we prove the second one. In Section~\ref{sec:ChromaticConnections} we present some interesting connections between synchronization of circular automata and chromatic polynomials of circulant graphs. Finally, in Section~\ref{sec:FutureWork} we present some possible directions towards generalizing and improving the results presented in this paper.

\section{Main result} \label{sec:MainResult}
Let $n$ be a positive integer. An automaton $\mathcal{A}(\zn,L)$, where $\zn := \{0,1,\ldots,n-1\}$ is the set of states, is called a \textit{circular automaton} if $L$ contains a permutation that decomposes in exactly one cycle.
Let $(i)_n := i \mod  n$. Let $\mathcal{M}_n$ denote the set of all mappings from $\mathbb{Z}_n$ to itself, and let $\mathbb{P}$ denote the uniform probability measure on $\mathcal{M}_n$. We will write the elements of $\mathcal{M}_n$ as vectors by identifying the mapping $\mathbf{b}(i) = b_i, ~i=0,\dots, n-1$ with the vector $\mathbf{b}=(b_0, \dots, b_{n-1})$.

In what follows, we denote by $\mathcal{A}_n (\mathbf{b}) := (\zn,\{\mathbf{a},\mathbf{b}\})$ a circular automaton of order $n\in\mathbb{N}$, with $\mathbf{a}:\zn \rightarrow \zn$ being the circular right shift permutation $a(i) = (i+1)_n$ and $\mathbf{b} := (b_{0},...,b_{n-1})$ being an element of $\mathcal{M}_n$. We will understand that $\mathbf{b}$ is ``randomly'' chosen from $\mathcal{M}_n$ according to the uniform probability measure $\mathbb{P}$, making $\mathcal{A}_n (\mathbf{b})$ a random circular automaton.

It follows from work of Perrin \cite{perrin1977codes} that a circular automaton $\A(Q,L)$ of prime order synchronizes if and only if $L$ contains a non-permutation. Pin \cite{MR520853} proved with combinatorial methods that a circular automaton $\A(Q,L)$ of prime order which has a letter of rank $\frac{n-1}{2} \leq  k\leq n$ has a minimal word of size at most $(n-k)^2$. For the probability of synchronization of $\A_p(\mathbf{b})$ a very precise result is known.

\begin{thm}[\cite{perrin1977codes}\cite{MR520853}]\label{thm:Perrin}
	  Let $p$ be a prime number. Then
	  
	$$
	\prob{\left\{ \mathbf{b} \in \mathcal{M}_p:~\A_p(\mathbf{b}) \mbox{ synchronizes} \right\}}
	=
	1- \frac{p!}{p^p}
	=
	1-\Theta\left(\frac{\sqrt{p}}{e^{p}}\right).
	$$
	Thus, a uniformly distributed random circular automaton of prime order $p$ with $k\geq 2$ letters synchronizes with high probability (w.h.p.).
\end{thm}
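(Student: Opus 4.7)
The plan is to reduce the statement directly to the cited criterion of Perrin: a circular automaton $\A(Q,L)$ of prime order $p$ synchronizes if and only if $L$ contains a letter that is not a permutation of $Q$. Since in $\A_p(\mathbf{b})$ the letter $\mathbf{a}$ is by construction the cyclic right shift, which is a permutation, the event that $\A_p(\mathbf{b})$ synchronizes is exactly the event that $\mathbf{b}$ is not a permutation of $\mathbb{Z}_p$.

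From here the argument is purely combinatorial. Under the uniform measure $\mathbb{P}$ on $\mathcal{M}_p$, each of the $p^p$ vectors $\mathbf{b}=(b_0,\dots,b_{p-1})\in\mathbb{Z}_p^{p}$ is equally likely, and exactly $p!$ of them correspond to bijections $\mathbb{Z}_p\to\mathbb{Z}_p$. Hence
\[
\prob{\A_p(\mathbf{b})\text{ synchronizes}} \;=\; 1 - \prob{\mathbf{b}\text{ is a permutation}} \;=\; 1 - \frac{p!}{p^p}.
\]
The asymptotic identity $\frac{p!}{p^p}=\Theta\!\left(\frac{\sqrt{p}}{e^{p}}\right)$ then follows immediately from Stirling's formula $p!\sim\sqrt{2\pi p}\,(p/e)^p$.

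For the concluding w.h.p. statement with an arbitrary number $k\ge 2$ of letters, I would note that adding further random letters $\mathbf{b_2},\dots,\mathbf{b_{k-1}}$ can only enlarge $L$, so synchronization fails only when every single one of the $k-1$ non-distinguished letters is a permutation. By independence under the uniform product measure this happens with probability $(p!/p^p)^{k-1}$, which is at most $p!/p^p=O(\sqrt{p}\,e^{-p})$ for $k\ge 2$, and thus tends to $0$ as $p\to\infty$.

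There is essentially no obstacle in this argument: the nontrivial work is entirely contained in the cited Perrin criterion, and once that is invoked the probability computation is elementary. The only mild care needed is to make clear that the event $\{\A_p(\mathbf{b})\text{ synchronizes}\}$ is precisely the complement of $\{\mathbf{b}\in\mathrm{Sym}(\mathbb{Z}_p)\}$, so that the counting $|{\mathrm{Sym}(\mathbb{Z}_p)}|/|\mathcal{M}_p|=p!/p^p$ yields the exact probability rather than merely a bound.
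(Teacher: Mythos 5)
Your proposal is correct and matches the approach the paper implicitly relies on: the paper does not actually prove Theorem~\ref{thm:Perrin} but cites it as a consequence of Perrin's criterion (a circular automaton of prime order synchronizes iff $L$ contains a non-permutation), and your argument simply spells out the elementary counting $|\mathrm{Sym}(\mathbb{Z}_p)|/|\mathcal{M}_p| = p!/p^p$, the Stirling asymptotics, and the independence argument for $k\geq 2$ letters that make this reduction rigorous.
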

Theorem~\ref{thm:Perrin} is not explicitly stated in \cite{perrin1977codes}, but it is observed in \cite{MR520853} that 
	Perrin's work implies the theorem.\\

	It is known that the \Cerny conjecture holds true for the class of circular automata \cite{dubuc1998}. In a closely related work, B{\'e}al, Berlinkov and Perrin \cite{beal2011} gave an $O\left(n^2 \right)$ upper bound for the shortest words of synchronizing automata with a single cluster.

 A natural question arises:  do random circular automata of order $n$ (not necessarily prime) synchronize with high probability? We give a positive answer to this question in the following:
\begin{thm}[\textbf{Main result}]\label{thm:main}
	The following holds:
	$$
	\prob{  \left\{  \mathbf{b} \in \mathcal{M}_n:~\mathcal{A}_n (\mathbf{b})\mbox{ synchronizes} \right\} } =
	1 - O\left(\frac{1}{n}\right)
	$$
	as $n \to \infty$. Thus, a randomly chosen $\mathcal{A}_n (\mathbf{b})$ synchronizes w.h.p. as $n \to \infty$.
\end{thm}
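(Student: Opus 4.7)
The plan is to invoke Claim \ref{claim:SynchroEquivalence} and exploit the rotational symmetry induced by the shift letter $\mathbf{a}$. Because $\mathbf{a}$ preserves the cyclic distance $\dist{i-j}$ between any two states, the evolution of a pair $(i,i+d)$ under the semigroup generated by $\{\mathbf{a},\mathbf{b}\}$ is controlled, up to a common shift, by the evolution of the single distance $d$. I would therefore encode all one-step possibilities into a random matrix $\Tb$ whose $(i,j)$-entry is $\db{j}{i}$: the $i$-th row lists every distance reachable from distance $i$ after applying $\mathbf{a}^{j}\mathbf{b}$ for some $j\in\zn$. The automaton $\auto(\mathbf{b})$ then synchronizes if and only if, for every $d\in\{1,\dots,\nmed\}$, the vertex $d$ can reach $0$ in the directed graph on $\{0,1,\dots,\nmed\}$ whose out-neighborhood of $i$ is the $i$-th row of $\Tb$.

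With this encoding in hand, I would introduce two sufficient events. Let $\RU$ be the event that every row with index $i\in\{1,\dots,\nmed\}$ contains an entry at most $\alpha$, and let $\RD$ be the event that every row with index $i\in\{1,\dots,\beta\}$ contains a $0$. Choosing $\alpha\le\beta$, on $\RU\cap\RD$ any distance collapses in at most two stages: first shrink $d$ to some $d'\le\alpha$ via $\RU$, then collapse $d'$ to $0$ via $\RD$. By Claim \ref{claim:SynchroEquivalence} this forces $\auto(\mathbf{b})$ to synchronize, so the theorem reduces to proving the two probabilistic lemmas
\[
\prob{\cRU}=O(1/n)\qquad\text{and}\qquad\prob{\cRD}=O(1/n),
\]
for a suitably chosen pair $(\alpha,\beta)$ — most naturally of size growing like a small power of $n$ — after which Theorem \ref{thm:main} follows by a union bound.

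The main obstacle is that the entries of $\Tb$ are far from independent: each $b_{k}$ appears in $n$ distinct entries, and the column index $(j+i)_{n}$ couples rows to one another. A direct first-moment estimate shows that the expected number of bad rows tends to $0$, but to convert this into a probability bound I would need a second-moment analysis of pairs of rows, tracking the joint distribution of $\db{j}{i}$ and $\db{s}{r}$ as $(i,j)$ and $(r,s)$ vary. The bulk of the technical work will therefore go into understanding this covariance structure; in particular, one must isolate the ``generic'' configurations of indices, where near-independence holds, from the arithmetic pathologies — for instance, rows whose index shares a large factor with $n$, or column pairs where $(j+i)_n\in\{s,s'\}$ — which have to be handled as separate small-probability contributions. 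Once this dependence structure is under control, both key lemmas should follow from essentially the same recipe, applied with different thresholds tailored to the ``row'' and ``zero'' events respectively.
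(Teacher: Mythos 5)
Your reduction to the matrix $\Tb$ and the general two–stage strategy (move a distance, then collapse it) match the paper, but the events you propose to control are different from the paper's, and one of them is provably not a high-probability event, so the key lemma you need is false.

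Your $\RU$ (every row contains an entry $\le\alpha$) is fine: for a fixed row $i$ the probability that all entries exceed $\alpha$ is roughly $\left(1-\frac{2\alpha+1}{n}\right)^{n}\approx e^{-2\alpha}$, so $\alpha\asymp\log n$ makes the union bound over $\nmed$ rows into $O(1/n)$. The problem is $\RD$, which in your formulation requires \emph{every} row with index in $\{1,\dots,\beta\}$ to contain a zero. For any fixed row $i$, the probability it contains no zero is asymptotically $e^{-1}$ (the entries of row $i$ can be split into a large acyclic, hence i.i.d., subcollection, each equal to $0$ with probability $1/n$; see Claim~\ref{claim:SubsetIndependent} and the computation around \eqref{eq:Ey}). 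So each small-index row fails to contain a zero with constant probability $\approx e^{-1}\approx 0.37$, essentially independently across $i$. Hence $\prob{\RD}\approx(1-e^{-1})^{\beta}\to 0$ once $\beta\to\infty$. Since your two-stage argument needs $\alpha\le\beta$ and $\alpha$ must grow with $n$, your $\RD$ has vanishing, not high, probability, and the lemma $\prob{\cRD}=O(1/n)$ that you propose is simply false. The issue is structural, not a covariance estimate: a constant proportion of rows will genuinely have no zero, and nothing forces those rows to avoid the small indices.

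The paper escapes this by asking for two looser and differently-shaped events. Instead of ``each row has a \emph{small} entry,'' it uses $\RU$ = ``each row has at least $\alpha\nmed$ \emph{distinct} entries among the $\nmed+1$ possible values''; and instead of ``\emph{all} small-index rows have a zero,'' it uses $\RD$ = ``at least $\beta\nmed$ of the rows (not necessarily the small-index ones) contain a zero.'' With $\alpha+\beta>1$, a pigeonhole argument shows that every row $i$ without its own zero must have some entry $j\ne 0$ that indexes a row with a zero, giving the two-step collapse without ever demanding zeros in prescribed rows. Both of the paper's events are provable with $\alpha\approx 1-e^{-1}$ and $\beta\approx\frac12$ (via McDiarmid's inequality and a second-moment bound respectively), and this is the route you should take: your fix is to replace ``row $d$ has a small entry that lands in a prefix of rows that all have zeros'' by ``row $d$ has a rich set of entries and a positive fraction of rows have zeros, so some entry of row $d$ hits a zero-bearing row.''
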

\begin{remark}
	Theorem~\ref{thm:main} does not follow from the results of Berlinkov or Nicaud. In their models, they use a random automaton $\A(Q,L)$ of order $n$ where $L$ is a collection of $k$ mappings from $Q$ to $Q$ i.i.d.\ uniformly chosen. For a fixed $k$, the probability of randomly chosen $k$ mappings to contain a permutation with exactly one cycle is bounded from above by $k\cdot \frac{n!}{n^n} \xrightarrow{n \rightarrow \infty} 0$.\\
\end{remark}
Given $n \in \mathbb{N}$ and $r \in \mathbb{Z}$, we define the \textit{$n$-cyclic absolute value} of $r$ to be
$$
	\dist{r}:= \min\left\{ (r)_n,(-r)_n)\right\} \in \left\{0,1,\ldots,\nmed \right\}.
$$
When $r,s \in \mathbb{Z}$ then $\dist{r-s}$ 
		is the \textit{$n$-cyclic distance} between $r$ and $s$.
		When the numbers $0,1,\ldots,n-1$ are identified with the vertices of a cycle of length $n$, the $n$-cyclic distance between two such numbers is the length of the shortest path between them in the cycle.
		We now introduce the main tool for the proof of the main theorem.
 \begin{defi}
  Let $\mathcal{A}_n (\mathbf{b}) := (\zn,\{\mathbf{a},\mathbf{b}\})$ be a circular automaton with $\mathbf{b} = (b_0,b_1,\ldots,b_{n-1})$.
Then we define $T_{\b{}}$ to be the matrix
		\begin{equation}\label{eq:KeyMatrix}
		T_{\b{}}
		:=
		\begin{bmatrix}
			\dist{b_{0}-b_{1}}&\dist{b_{1}-b_{2}}&\ldots&\dist{b_{k}-b_{k+1}}&\ldots&\dist{b_{n-1}-b_{0}}\\
			\dist{b_{0}-b_{2}}&\dist{b_{1}-b_{3}}&\ldots&\dist{b_{k}-b_{(k+2)_n}}&\ldots&\dist{b_{n-1}-b_{1}}\\
			\vdots&\vdots&\ddots&\vdots&\ddots&\vdots\\
			\dist{b_{0}-b_{i}}&\dist{b_{1}-b_{1+i}}&\ldots&\dist{b_{k}-b_{(k+i)_n}}&\ldots&\dist{b_{n-1}-b_{i-1}}\\
			\vdots&\vdots&\ddots&\vdots&\ddots&\vdots\\
			\dist{b_{0}-b_{\nmed}}&\dist{b_{1}-b_{1+\nmed}}&\ldots&\dist{b_{k}-b_{(k+\nmed)_n}}&\ldots&\dist{b_{n-1}-b_{\nmed-1}}
		\end{bmatrix},
		\end{equation}
		shortly written as
		$$ T_{\b{}}(i,j) =  \db{j}{i} \mbox{ for } 1 \leq i \leq \nmed \mbox{ and } 0 \leq j \leq n-1. $$
 \end{defi}
As before, $b_i = \b{}(i)$, i.e., the image of state $i$ under $\mathbf{b}$.
 To be clear, note that the first row of $T_{\b{}}$ is formed of the cyclic distances of the images of states $r,s$ such that $\dist{r-s} = 1$; in general, the $i$-th row of $T_{\b{}}$ is formed of the cyclic distances of the images of pairs of states $r,s$ of cyclic distance $i$. Notice that the columns are counted from $0$ to $n-1$.\\
 
 For $\b{} \in \mathcal{M}_n$ and $i = 1,\ldots,\nmed$, let $R_i (\b{})$ denote the number of different entries in row $i$ of $T_{\b{}}$:
 \begin{equation}\label{eq:RiB}
 R_i (\b{})
 :=
 \#
 \left\{\dist{b_0-b_{(0+i)_n}},
 \dist{b_1-b_{(1+i)_n}},
 \ldots,
 \dist{b_{n-1}-b_{i-1}}\right\}.
 \end{equation} 
Set
 \begin{equation}\label{eq:E_row}
\RU := \bigcap_{i=1}^{\nmed}\left\{\b{} \in \mathcal{M}_n:~R_i(\b{}) \geq \alpha \nmed \right\},
 \end{equation}
 i.e., $\RU$ contains those $\b{}$ for which every row of $T_{\b{}}$ has at least $\alpha \nmed$ different elements. Its complement is 
\begin{equation}\label{eq:Ec_row}
\mathcal{E}_{\mathrm{\small row}}^c (\alpha) := \bigcup_{i=1}^{\nmed}\left\{\b{} \in \mathcal{M}_n:~R_i(\b{}) < \alpha \nmed \right\}.
\end{equation}
We also define
\begin{equation}\label{eq:E_zero}
\mathcal{E}_{\mathrm{\small zero}} (\beta) := 
\left\{\b{} \in \mathcal{M}_n:~\D(\b{})  \geq  \beta \nmed \right\},
\end{equation}
and its complement  
\begin{equation}\label{eq:Ec_zero}
  \mathcal{E}_{\mathrm{\small zero}}^c (\beta) := 
  \left\{\b{} \in \mathcal{M}_n:~\D(\b{})  < \beta \nmed \right\},
\end{equation}
  where
  $$
  D_i(\mathbf{b})
  :=
  \begin{cases}
  1, &\mbox{ if there exist } \, k,l \in \zn
  \mbox{ such that } \dist{k-l} = i \mbox{ and } \dist{b_k-b_l} = 0;
  \\
  0, &\mbox{ otherwise, }
  \end{cases}
  $$
  and
  $$
  \D(\b{})
  :=
  \sum_{i=1}^{\nmed}
  D_i(\b{}).
  $$
  That is, $\mathcal{E}_{\mathrm{\small zero}} (\beta)$ is the set of those $\b{}$ for which the matrix $T_{\b{}}$ has at least $\beta \nmed$ rows containing the entry zero.\\
  
  The proof of Theorem~\ref{thm:main} relies on the following two lemmas. 
  \begin{lemma}\label{lemma:LowerBoundRowElements}
		Let $\varepsilon>0$ and let $\alpha = 1-e^{-1}-\varepsilon$.
 		Then
		$$
		\prob{\mathcal{E}_{\mathrm{\small row}}^c (\alpha)}	=
			O\left(\frac{1}{n}\right)
		$$
		as $n \to \infty$.
	\end{lemma}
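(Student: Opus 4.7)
The plan is to lower bound $\E{R_i(\mathbf{b})}$ for each row $i$ of $\Tb$, establish sharp concentration of $R_i(\mathbf{b})$ around this mean, and then union-bound over $i \in \{1,\ldots,\nmed\}$. Writing $R_i(\mathbf{b}) = \sum_{v=0}^{\nmed} \indi{v \text{ appears in row } i \text{ of } \Tb}$, linearity of expectation gives
\[
\E{R_i(\mathbf{b})} \;=\; \sum_{v=0}^{\nmed} \bigl(1 - \prob{v \notin \text{row } i}\bigr),
\]
so the heart of the proof is to control these missing-value probabilities.

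The key structural observation is that the shift $k \mapsto (k+i)_n$ partitions $\zn$ into $d := \gcd(n,i)$ orbits of common length $m := n/d$, and the coordinates of $\mathbf{b}$ attached to different orbits are independent. On a single orbit the relevant entries of row $i$ are the cyclic differences of $m$ i.i.d.\ uniform $\zn$-valued variables arranged in a cycle, so the missing event factors as $\prob{v \notin \text{row } i} = p(m,v)^d$, where $p(m,v)$ is the orbit-level missing probability. A transfer-matrix argument on the circulant graph of distance-$v$ edges in $\zn$ yields $p(m,v) = (1-2/n)^m + O(2^m/n^{m-1})$ for generic $v$, and the worst case of $p(m,v)^d$ occurs at $m = 2$, $d = \nmed$. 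This happens precisely when $n$ is even and $i = n/2$, where the two entries of each length-$2$ orbit coincide so that only $\nmed$ independent pair-values remain, and the product tends to $(1-2/n)^{\nmed} \to e^{-1}$; every other $i$ gives the smaller limit $e^{-2}$. Summing over $v$ and absorbing the negligible contribution of the special values $v \in \{0, n/2\}$ produces $\E{R_i(\mathbf{b})} \geq (1 - e^{-1} - \tfrac{\varepsilon}{2})\nmed$ for $n$ large, uniformly in $i$.

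For the concentration step, note that each coordinate $b_k$ participates in at most two entries of row $i$ of $\Tb$, so altering a single $b_k$ changes $R_i(\mathbf{b})$ by at most $4$. Since $b_0,\ldots,b_{n-1}$ are independent under $\mathbb{P}$, McDiarmid's bounded-differences inequality yields
\[
\prob{R_i(\mathbf{b}) \leq \E{R_i(\mathbf{b})} - \tfrac{\varepsilon}{2}\nmed} \;\leq\; \exp(-c\varepsilon^2 n)
\]
for some absolute constant $c > 0$; alternatively, the covariance/dependence analysis developed in Section~\ref{sec:DependenceStructureTb} can be fed into a Chebyshev-type estimate to the same effect. A union bound over the $\nmed$ rows then gives $\prob{\cRU} = o(n^{-k})$ for every $k$, well within the target $O(1/n)$. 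The main obstacle is not the concentration but the first-moment estimate: the limit of $\prob{v \notin \text{row } i}$ is discontinuous in the orbit length $m$, jumping from $e^{-2}$ (for orbits of length $\geq 3$) to $e^{-1}$ at the boundary $m = 2$, so the argument must carefully accommodate the case $i = n/2$ together with the non-principal spectral corrections from the transfer matrix and the exceptional values $v \in \{0, n/2\}$.
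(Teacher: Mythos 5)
Your plan has the same skeleton as the paper's proof: decompose $R_i$ as $\nmed + 1$ minus the number of missing values, lower-bound $\E{R_i}$ by controlling the missing-value probabilities, apply McDiarmid for concentration, and union-bound over rows. The genuine difference is the first-moment step. The paper sidesteps exact computation: it observes that the associated multi-graph of row $i$ is the circulant graph $C_n(i)$ (a disjoint union of $\gcd(n,i)$ cycles), deletes one edge per cycle to obtain $n-\gcd(n,i)$ i.i.d.\ entries, and then uses the crude monotonicity bound $\E{r_d^{(i)}} \leq (1-m_d/n)^{n-\gcd(n,i)} \leq (1-m_d/n)^{n/2}$, which is $\leq e^{-1}$ for generic $d$. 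You instead factor the missing-value event over the $\gcd(n,i)$ orbits and compute the orbit-level probability $p(m,v)$ essentially exactly via the transfer matrix $J-A(C_n(v))$, obtaining $p(m,v)=(1-2/n)^m+O(2^m/n^{m-1})$ and thus $p(m,v)^d\to e^{-2}$ for all $i\neq n/2$ and $\to e^{-1}$ only at $i=n/2$. This is sharper than what the paper proves (the paper's bound does not distinguish these limits), though more work to make airtight; the paper's monotonicity argument avoids the transfer-matrix analysis entirely and is sufficient since only an upper bound $\lesssim e^{-1}$ is needed.

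Two minor points. First, the Lipschitz coefficient of $R_i$ is $2$, not $4$: changing one $b_k$ alters exactly the two entries $\dist{b_k-b_{(k+i)_n}}$ and $\dist{b_{(k-i)_n}-b_k}$, and each altered entry can shift the count of distinct values by at most $1$, so $|R_i(\mathbf{v})-R_i(\mathbf{w})|\leq 2$; your bound of $4$ is still valid and only costs a constant in the exponent. Second, the aside that a Chebyshev-type estimate would give ``the same effect'' is not correct as stated: with $\var{R_i}=O(n)$, Chebyshev gives $\prob{R_i<\E{R_i}-\Theta(n)}=O(1/n)$ per row, and the union bound over $\nmed$ rows would then yield only $O(1)$, not $O(1/n)$. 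The exponential tail from McDiarmid is actually needed here, and you correctly use it as your main route.
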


	\begin{lemma}\label{lemma:ConcentrationOfD}
		Let $\varepsilon \in (0,1)$ and let $\beta = \frac{1}{2}-\varepsilon$.
		Then
		$$
			\prob{\mathcal{E}_{\mathrm{\small zero}}^c (\beta)}
			=	O\left(\frac{1}{n}\right).
		$$
		as $n \to \infty$.
	\end{lemma}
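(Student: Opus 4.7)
The plan is to apply Chebyshev's inequality to $\D(\b{}) = \sum_{i=1}^{\nmed} D_i(\b{})$. Once I establish $\E{\D(\b{})} = (1-e^{-1})\nmed + O(1)$ and $\var{\D(\b{})} = O(n)$, the gap between $\E{\D(\b{})}$ and the threshold $\beta\nmed$ is of order $n$ (since $\beta < 1/2 < 1 - e^{-1}$), and Chebyshev delivers the claimed $O(1/n)$ bound.

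For the expectation, I note that $\{D_i(\b{}) = 0\}$ is exactly the event that $\b{}$, viewed as an $n$-coloring of $\zn$, is a proper coloring of the circulant graph $\Gamma_i := (\zn, \{\{k, (k+i)_n\}: k \in \zn\})$. For $1 \le i < n/2$ this graph is a disjoint union of $d := \gcd(n,i)$ cycles of common length $n/d \ge 3$ (and a perfect matching when $i = n/2$). Using the chromatic polynomial $P_m(x) = (x-1)^m + (-1)^m(x-1)$ of a cycle of length $m$, a direct asymptotic computation yields
\[
\prob{D_i(\b{}) = 0} = \left(1 - \tfrac{1}{n}\right)^n + O(n^{-1}) = e^{-1} + O(n^{-1})
\]
uniformly for $i < n/2$, from which the claimed expectation follows by summation (with the $i = n/2$ row contributing $1 - e^{-1/2}$, absorbed in the $O(1)$ error).

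For the variance I would decompose
\[
\var{\D(\b{})} = \sum_i \var{D_i(\b{})} + \sum_{i \neq j} \cov{D_i(\b{}), D_j(\b{})},
\]
with the diagonal contribution trivially $O(n)$. For the covariances I would use $\cov{D_i(\b{}), D_j(\b{})} = \prob{D_i(\b{})=0,\, D_j(\b{})=0} - \prob{D_i(\b{})=0}\,\prob{D_j(\b{})=0}$ and identify the joint event with $\b{}$ being a proper coloring of the superposition $\Gamma_i \cup \Gamma_j$. For ``generic'' pairs $(i,j)$ this superposed $4$-regular circulant graph has large girth; a Whitney/cluster expansion of its chromatic polynomial then delivers $\prob{D_i(\b{})=0,\, D_j(\b{})=0} = e^{-2}(1 + O(1/n))$, hence $\cov{D_i(\b{}), D_j(\b{})} = O(1/n)$, contributing $O(n)$ in the aggregate.

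The hard part is the arithmetically special pairs $(i,j)$---those for which $\Gamma_i \cup \Gamma_j$ contains many short cycles, for instance when $2i \equiv \pm j \Mod{n}$ or when $\gcd(i,j,n)$ is large---where the naive expansion breaks down and individual covariances may be $\Omega(1/n)$ or even of constant order. Such pairs are sparse, but controlling their aggregate contribution to the variance is precisely where the dependence-structure analysis of Section~\ref{sec:DependenceStructureTb}, which quantifies the near-independence of the entries of $\Tb$ at well-separated positions, becomes essential. Once that is in hand, Chebyshev immediately gives $\prob{\cRD} = O(1/n)$.
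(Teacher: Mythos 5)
Your strategy of applying Chebyshev directly to $\D$ is exactly the route the authors identify as \emph{not} going through, and the obstruction you flag is not a detail to be filled in later — it is the whole difficulty. You correctly compute $\E{\D}$ via chromatic polynomials of the single-parameter circulants $C_n(i)$ (this matches the paper's Claim~\ref{claim:VD}), but the variance bound $\var{\D} = O(n)$ that your argument needs is stated in the paper as an open \emph{conjecture}, and Remark~\ref{rmk:PiExplicitPijNPHard} explicitly says the authors' attempts to estimate $\var{\D}$ were ``unfruitful,'' linking the difficulty to the NP-hardness of computing chromatic polynomials of two-parameter circulants $C_n(i,j)$. Your appeal to the dependence-structure machinery of Section~\ref{sec:DependenceStructureTb} does not close the gap: Proposition~\ref{prop:IndependenceAcyclic} governs individual entries $T_{\b{}}(i,j)$, not the row-level aggregates $D_i$, and the covariance $\cov{D_i,D_j}$ involves the joint event that \emph{no} entry in row $i$ and no entry in row $j$ is zero, whose probability is precisely $P_{i,j}(n)/n^n$ — not something the acyclicity criterion decides.

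The paper's actual proof takes a different and crucially simpler route precisely to sidestep this. It introduces $\Z_0(\b{})$, the total count of zero entries in $T_{\b{}}$, and $\Z_1(\b{})$, the count of unordered same-row pairs of zero entries, and uses the pointwise inequality $\D \geq \Z_0 - \Z_1$ (Claim~\ref{claim:LowerBoundD}). The point is that $\Z_0$ and $\Z_1$ are low-degree polynomials in the indicators $y_{i,j} = \indi{T_{\b{}}(i,j)=0}$, so their expectations and variances \emph{do} reduce to counting acyclic versus cyclic configurations of edges — exactly what Proposition~\ref{prop:IndependenceAcyclic} and the enumeration of small multi-graphs (Claim~\ref{claim:Bounds1Isomorphism} and Figure~\ref{fig:Nodd}) handle. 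The paper then applies Chebyshev to $\Z_0$ and $\Z_1$ separately, combining via Claim~\ref{claim:ConcentrationReduction}, rather than to $\D$ itself. To repair your proof you would need to either establish the variance conjecture — which the authors could not — or adopt the $\Z_0, \Z_1$ surrogate as they do.
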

	
\begin{proof}[\textbf{Proof of Theorem~\ref{thm:main}}]
 The main idea of the proof is to transform the question of synchronization of $\auto(\b{})$ into a question concerning properties of the matrix $T_\mathbf{b}$. The 
 functions $T_\mathbf{b}(i,j)$
 are random variables over $\mathcal{M}_n$, and to obtain our desired probability estimates we will need to understand the joint stochastic dependence structure of these random variables.

 Let $\mathbf{b} \in \mathcal{M}_n$ and consider the associated Matrix $T_\mathbf{b}$. The first observation is that a zero in row $i$ of $T_\mathbf{b}$ means that two states $r,s$ with cyclic distance $i$ synchronize under $\mathbf{b}$ (i.e., ${\b{}}(r) = {\b{}}(s)$), which implies that any pair $r',s'$ with cyclic distance $i$ can be synchronized with a word of the form $\mathbf{a}^l  \mathbf{b}$ because $\{r',s'\} \mathbf{a}^l = \{r,s\} $ for some $l$.
 The second observation is that if the $i$-th row of $T_\mathbf{b}$ contains a number $j =|b_k - b_{(k+i)_n}|_n$ and the $j$-th row contains a zero, then every pair of states $(r,s)$ with cyclic distance $i$ can be synchronized with a word of the form $\mathbf{a}^{l_1} \mathbf{b} \mathbf{a}^{l_2} \mathbf{b}$. Indeed, we can proceed as follows: $\{r,s\} \stackrel{\mathbf{a}^{l_1}}{\rightarrow}\{k,(k+i)_n\} \stackrel{ \mathbf{b} }{\rightarrow} \{b_k,b_{(k+i)_n}\}$, where this last pair has n-cyclic distance $j$; then $\{b_k,b_{(k+i)_n}\}$ synchronizes with a word of the form $\mathbf{a}^{l_2} \mathbf{b}$, for some $l_2$ because the $j$-th row contains a zero. 
 With these two observations, we establish sufficient conditions on $T_\mathbf{b}$ for the synchronization of $\auto(\mathbf{b})$. The sets $\mathcal{E}_{\mathrm{\small row}} (\alpha)$ and $\mathcal{E}_{\mathrm{\small zero}} (\beta)$ which we defined in \eqref{eq:E_row} and \eqref{eq:E_zero} play a crucial role.\\
 Let $\mathbf{b} \in \mathcal{M}_n$. If $\mathbf{b}$ is contained in both $\mathcal{E}_{\mathrm{\small row}} (\alpha)$ and $\mathcal{E}_{\mathrm{\small zero}} (\beta)$ for some $\alpha , \beta>0$ such that $\alpha + \beta> 1$, then $\mathcal{A}_n (\mathbf{b})$ synchronizes. This follows from the two previous observations together with the union bound. Indeed, let $(r,s)$ be any pair of different states and let $i=|r - s|_n$. If row $i$ contains a zero, we can synchronize $\{r,s\}$ with a word of the form $\mathbf{a}^l \mathbf{b}$; otherwise, row $i$ contains an entry $j\neq 0$ such that row $j$ contains a zero  (because $\alpha + \beta > 1$), which implies that $\{r,s\}$ can be synchronized with a word of the form $\mathbf{a}^{l_1} \mathbf{b} \mathbf{a}^{l_2} \mathbf{b}$. Therefore, every pair of different states synchronizes and $\mathcal{A}_n (\mathbf{b})$ synchronizes by Claim~\ref{claim:SynchroEquivalence}. Therefore, for any $\alpha, \beta > 0$ satisfying $\alpha + \beta > 1$, we have the following bound:
\begin{equation}\label{eq:SynchroMatrixReduction}
	\begin{split}
	\prob{ \left\{ \b{} \in \mathcal{M}_n:~\auto(\b{}) \mbox{ synchronizes} \right\} }
	\geq
	\prob{\RU \cap \RD}
	&=
	1- \prob{\cRU \cup \cRD }
	\\&\geq
	1- \prob{\cRU} - \prob{\cRD}.
	\end{split}
\end{equation}
		Now, by the last inequality and by Lemmas~\ref{lemma:LowerBoundRowElements} and \ref{lemma:ConcentrationOfD} we obtain the bound stated in the main theorem. We can choose, for example, $\varepsilon' = 0.05$, $\alpha^{\star} = 1-e^{-1}-\varepsilon'\approx 0.582$ and $\beta^{\star} = 0.5-\varepsilon' = 0.45$, so that $\alpha^{\star}>0$, $\beta^{\star}>0$ and $\alpha^{\star} + \beta^{\star} > 1$. Then we have
		$$
		\prob{ \left\{ \b{} \in \mathcal{M}_n:~\auto(\b{}) \mbox{ synchronizes} \right\} }
		\geq
		1- \underbrace{\prob{\cRUstar}}_{=O\left(\frac{1}{n}\right)} - \underbrace{\prob{\cRDstar}}_{= O\left(\frac{1}{n}\right)}
		=
		1 - O\left(\frac{1}{n}\right)
		$$
		as $n \to \infty$.
\end{proof}

\section{Independence among the random variables $T_{\b{}}(i,j)$}
\label{sec:DependenceStructureTb}
For every pair $(i,j)$, $1 \leq i \leq \nmed$ and $0 \leq j \leq n-1$,
the function $T_\mathbf{b}(i,j):~\mathcal{M}_n \mapsto \zn$ is a random variable on the space $\mathcal{M}_n$, equipped with the uniform probability measure $\mathbb{P}$ (and with the power set of $\mathcal{M}_n$ as the natural sigma-field).
It is crucial for our proof to give a criterion on pairs of indices $(i_1, j_1), \dots, (i_k, j_k)$ which guarantees that the random variables $T_\mathbf{b}(i_1,j_1)$, \dots, $T_\mathbf{b}(i_k,j_k)$ are independent. First, notice that not every subset of
random variables $T_{\b{}}(i,j)$ is independent. For example,
$$T_{\b{}}(1,0) = \dist{b_0-b_1}
,\quad
T_{\b{}}(1,1) = \dist{b_1-b_2}
,\quad
T_{\b{}}(2,0) = \dist{b_0 -b_2}$$
are clearly dependent: if the first two random variables $T_{\b{}}(1,0)$ and $T_{\b{}}(1,1)$ are zero, then $b_0 = b_1 = b_2$, which implies that $\dist{b_0 -b_2} = 0$ and so $T_{\b{}}(2,0)$ necessarily is also zero. This dependence comes from the fact that there is a ``cycle'' of the form $b_0 \to b_1 \to b_2 \to b_0$ generated by the indices of these three random variables. Generally, it will turn out that a set of random variables $T_{\b{}}(i,j)$
is independent if and only if the corresponding indices are ``acyclic''. We formalize this in the following
\begin{defi}
	Let $$S = \{(i_1,j_1),(i_2,j_2),\ldots,(i_k,j_k)\}$$ be a multi-set, where $i_l,j_l \in \zn$.
	The \textit{associated (multi-)graph} $G(S)$ is the (multi-)graph with vertex set $\zn$ and edge (multi-)set 	$$\Big\{ \{j_1,(j_1+i_1)_n\}, \, \{j_2, (j_2+i_2)_n\},\ldots,\, \{j_k, (j_k+i_k)_n\}  \Big\}.$$
	 We say that $S$ is acyclic if its associated multi-graph $G(S)$
	is acyclic. 
	We also say that the edge $\{j,j+i\}$ is associated to the random variable $T_{\b{}}(i,j)$.
\end{defi}

The relation between acyclic index sets and independent variables is stated in the following

\begin{prop}\label{prop:IndependenceAcyclic}
	The variables $T_{\b{}}(i_1,j_1),T_{\b{}}(i_2,j_2),\ldots,T_{\b{}}(i_k,j_k)$ are i.i.d.\ $\iff$ the (multi-)set $S = \{(i_1,j_1),(i_2,j_2),\ldots,(i_k,j_k)\}$ is acyclic. Furthermore, if the variables are independent then
	\begin{equation}\label{eq:independenceCondition}
		\prob{\bigcap_{w = 1}^{k}\left\{\mathbf{b} \in \mathcal{M}_n:~ \Tb(i_w,j_w) = s_w \right\}}
		=
		\frac{\prod_{w = 1}^{k} m_{s_w}}{n^{k}},
		\quad
		\forall k\geq 1,
	\end{equation}
	 where $s_1,s_2,\ldots,s_k$ are arbitrary integers and
	$$
		m_s = \# \{ d \in \zn : \dist{d} = s \}
		=
		\begin{cases}
		&2, \quad
		\mbox{ if }
		0 < s < \frac{n}{2};
		\\
		&1,  \quad
		\mbox{ if }
		s = 0;
		\\
		&1,  \quad
		\mbox{ if }
		s = \frac{n}{2}
		\mbox{ and }
		\frac{n}{2} \in \mathbb{N};
		\\
		&0,  \quad \mbox{ otherwise}.
		\end{cases}
	$$
\end{prop}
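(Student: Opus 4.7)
The plan is to express the joint probability $\prob{\bigcap_{l=1}^k\{\Tb(i_l,j_l)=s_l\}}$ as a count of valid $\b{}\in\mathcal{M}_n$ divided by $|\mathcal{M}_n|=n^n$ and to compare it with the product of marginals. Since the coordinates $b_v$ are i.i.d.\ uniform on $\zn$, given any fixed $x\in\zn$ the number of $y\in\zn$ with $\dist{x-y}=s$ equals exactly $m_s$. Averaging over $x$ gives the marginal $\prob{\Tb(i,j)=s}=m_s/n$ for every admissible pair $(i,j)$, so identity of the marginal distributions is automatic and, once the formula \eqref{eq:independenceCondition} is established, mutual independence follows by comparing with $\prod_l m_{s_l}/n = \prod_l\prob{\Tb(i_l,j_l)=s_l}$.

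For the direction ``$S$ acyclic $\Rightarrow$ \eqref{eq:independenceCondition}'', let $V\subseteq\zn$ be the set of vertices of $G(S)$ incident to at least one edge, write $m=|V|$ and let $c$ be the number of connected components of $G(S)$; since $G(S)$ is a forest, $k=m-c$. Vertices outside $V$ are unconstrained and contribute a factor $n^{n-m}$. To count valid assignments on $V$, I would root each tree at an arbitrary vertex (giving $n^c$ free choices for the roots) and process the remaining edges in a BFS order from each root. When an edge $\{u,v\}$ with target value $s_l$ is processed, the parent's value has already been fixed and the number of admissible child values $b_v$ with $\dist{b_u-b_v}=s_l$ is exactly $m_{s_l}$, uniformly in the parent. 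Multiplying yields
$$\prob{\bigcap_{l=1}^k \{\Tb(i_l,j_l)=s_l\}} = \frac{n^{n-m}\cdot n^c\cdot \prod_{l=1}^{k} m_{s_l}}{n^n} = \frac{\prod_{l=1}^{k} m_{s_l}}{n^{m-c}} = \frac{\prod_{l=1}^{k} m_{s_l}}{n^{k}},$$
which is \eqref{eq:independenceCondition} and factorizes as the product of the marginals.

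For the converse, assume $G(S)$ contains a cycle consisting of edges $e_{l_1},\dots,e_{l_r}$. If $r=2$ (a repeated edge in the multi-set $S$) the two corresponding random variables are literally equal, so they are not independent. If $r\geq 3$, choose $s_{l_1}=\dots=s_{l_{r-1}}=0$ and any $s_{l_r}\in\{1,\dots,\nmed\}$: the first $r-1$ constraints collapse all vertices of the cycle to a common $b$-value, which forces $\Tb(i_{l_r},j_{l_r})=0\neq s_{l_r}$, so the joint probability vanishes. On the other hand, the independence prediction $\prod_{l} m_{s_l}/n^{r}$ is strictly positive. This contradicts \eqref{eq:independenceCondition}, and since independence of the full collection would imply independence of any subcollection, the $k$-tuple cannot be i.i.d.

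The main obstacle is the BFS enumeration in the forward direction, which hinges on the translation-invariance property that $\#\{y\in\zn:\dist{x-y}=s\}=m_s$ for every $x\in\zn$; this is the reason the product structure survives. Once this is in hand the rest is bookkeeping about the relation $k=m-c$ for forests and a careful treatment of the multi-edge case in the converse.
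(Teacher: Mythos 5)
Your proof is correct. Let me compare it with the paper's.

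For the forward direction (acyclic $\Rightarrow$ the product formula), the paper proceeds by induction on $k$: it locates a leaf edge, distinguishes the case where it is an isolated component from the case where only one endpoint is a leaf, and in the latter case conditions on the value at the non-leaf endpoint, exploiting that the conditional law of $T_{\b{}}(i_k,j_k)$ given $b_{j_k}$ does not depend on $b_{j_k}$. You instead give a single direct enumeration: root each tree, process non-root vertices in BFS order, and multiply the $m_{s_l}$ factors along edges, closing with the forest identity $k=m-c$ (where $c$ is the number of trees, i.e., the number of components meeting $V$; your phrasing slightly conflates this with the total number of components including isolated vertices, but the formula you compute makes clear what is meant). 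This is a cleaner packaging of the same underlying mechanism --- both proofs ultimately rest on the translation-invariance $\#\{y:\dist{x-y}=s\}=m_s$ for every $x$; the paper's inductive peeling of leaves is exactly your BFS enumeration run in the opposite order, so the content is the same but your direct count avoids the two-case analysis. For the converse, the paper sets the first $l-1$ cycle variables to $0$ and observes that the last is forced to $0$, so the conditional probability is $1$ rather than $1/n$; you instead observe that setting the last target to a nonzero value makes the joint probability literally zero while the product of marginals is positive, and you also treat the multi-edge case $r=2$ separately. Both are valid and rely on the same observation; yours is marginally more self-contained because it does not need to argue that the conditioning event has positive probability. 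One cosmetic slip: you write $\prod_l m_{s_l}/n$ where you mean $\prod_l m_{s_l}/n^k$, but this does not affect the argument.
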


Henceforth in the paper we use the concepts ``acyclic'' and ``independent'' interchangeably when we refer to a multi-set of independent random variable entries of $T_{\b{}}$, resp.\ to random variable entries whose associated multi-graph is acyclic.

\begin{remark}\label{rmk:EqualEdges}
Note that different
random variables $T_{\b{}}(i,j), \, T_{\b{}}(i',j')$
may be associated with the same edge;
since $1 \leq i \leq \nmed$
this only happens when $n$ is even and $i=i'=\frac{n}{2}$ and $j \equiv j' \mod \frac{n}{2}.$
Thus, for $n$ odd, a pair of different random variables $T_{\b{}}(i,j), \, T_{\b{}}(i',j')$ 
is always acyclic/independent.
\end{remark}

\begin{remark}\label{rmk:indep}
For a vector $\mathbf{b} \in \mathcal{M}_n$, we can write its entries $b_0, \dots, b_{n-1}$ as functions of $\mathbf{b}$. In other words, $b_0= b_0(\mathbf{b}), \dots, b_{n-1} = b_{n-1} (\mathbf{b})$ are random variables on $\mathcal{M}_n$, equipped with the uniform measure $\mathbb{P}$. The random variables $b_0, \dots, b_{n-1}$ are independent and identically distributed over this space; this follows immediately from the fact that the uniform measure on $\mathcal{M}_n$ is a product of $n$ one-dimensional uniform measures.
\end{remark}

\begin{proof}[\textbf{Proof of Proposition \ref{prop:IndependenceAcyclic}}]
	First note that any two random variables $T_{\b{}}(i,j) = \dist{b_j-b_{{(j+i)}_n}}$ and $T_{\b{}}(i',j') = \dist{b_{j'}-b_{{(j'+i')}_n}}$ are always identically distributed since $b_0,b_1,\ldots, b_{n-1}$ are i.i.d. (see Remark \ref{rmk:indep}). Note also that for all $s$
	$$
	\prob{\left\{\mathbf{b} \in \mathcal{M}_n:~\dist{b_p-b_{p+q}}=s\right\}} = \frac{n\cdot m_s}{n^2}
	= \frac{m_s}{n},
	$$
	which can seen by an easy counting argument: there are $n$ different possible choices of $b_p$, and then there are $m_s$ independent different choices of $b_{(p+q)_n}$ such that $\dist{b_p-b_{p+q}}=s$. Thus equation \eqref{eq:independenceCondition} is just a rephrasing of the fact that the random variables are independent. Therefore, what we need to prove is that independence holds if and only if the associated (multi-)graph is acyclic.\\
	$\Rightarrow)$ (by contraposition) Let $S=\{(i_1,j_1),\, (i_2,j_2),\ldots,\, (i_k,j_k)\}$ be a (multi-)set which
	contains a cycle.
	Thus, its associated multi-graph $G(S)$ has a cycle $C$ of length $l\geq 2$. Let this cycle be w.l.o.g.\  $$ j_1 \to (j_1+i_1)_n = j_2 \to
	\, (j_2+i_2)_n = j_3 \to \ldots \to (j_{l-1}+i_{l-1}) = j_l \to (j_l + i_l)_n = j_1.$$
	Recall that $T_{\b{}}(i,j) = 0 \iff b_{j} = b_{(j+i)_n}$. Thus if for some $\mathbf{b} \in \mathcal{M}_n$ we have $$T_{\b{}}(i_1,j_1) = T_{\b{}}(i_2,j_2)=\ldots= T_{\b{}}(i_{l-1},j_{l-1}) = 0,$$
	then $b_{j_1} = b_{j_2} = \ldots = b_{j_l}$, and so we automatically also have $T_{\b{}}(i_l,j_l) = \dist{b_{j_l}-b_{(j_l + i_l)_n}}=\dist{b_{j_l}-b_{j_1}}= 0$. Thus, the variables $T_{\b{}}(i_1,j_1), \dots, T_{\b{}}(i_\ell,j_\ell)$ are not independent. We conclude that an independent multi-set must be acyclic.
 	\\  $\Leftarrow)$ (by induction on $k$) Let $k \geq 2$. Assume that the multi-set $S_k=\{(i_1,j_1),\, (i_2,j_2),\ldots,\, (i_k,j_{k})\}$ is acyclic. We want to show that $T_\mathbf{b}(i_{k},j_{k})$ is independent of $T_\mathbf{b}(i_1, j_1), \dots, T_\mathbf{b}(i_{k-1},j_{k-1})$. This will allow us to factor out the $k$-th factor on the left-hand side of \eqref{eq:independenceCondition}, leading (by induction) to the formula on the right-hand side of \eqref{eq:independenceCondition}, which is equivalent to independence.\\  
We distinguish two cases: The first case is when the edge $\{j_k,(j_k+i_k)_n\}$ is a connected component by itself in $G(S)$. This means that the sets $S_1 := \{j_1, (j_1+i_1)_n,j_2, (j_2+i_2)_n, \dots, j_k,(j_{k-1}+i_{k-1})_n\}$ and $S_2:= \{j_{k},(j_{k} + i_{k})_n \}$ are disjoint. By construction, the random variables $T_\mathbf{b}(i_1, j_1), \dots, T_\mathbf{b}(i_{k-1},j_{k-1})$ depend only on $b_s$ with $s \in S_1$, while $T_\mathbf{b}(i_{k},j_{k})$ depends only on $b_s$ with $s \in S_2$. Since $b_0, \dots, b_{n-1}$ are independent by Remark \ref{rmk:indep}, this implies that $T_\mathbf{b}(i_{k},j_{k})$ is independent of $T_\mathbf{b}(i_1, j_1), \dots, T_\mathbf{b}(i_{k-1},j_{k-1})$, as desired.\\
For the second case, the edge $\{j_{k},(j_{k}+i_{k})_n\}$ is not a connected component by itself in $G(S)$. Since it is also not part of a cycle by assumption,we can assume that $(j_k + i_k)_n$ is a leaf vertex in $G(S)$. In principle, $T_\mathbf{b}(i_k,j_k)$ depends on $b_{j_k}$ as well as on $b_{(j_k+i_k)_n}$. However, since $T_\mathbf{b}(i_k,j_k)$ is defined as a cyclic distance, the conditional distribution of $T_\mathbf{b}(i_k,j_k)$ given $b_{j_k}$ is always the same. In formulas, for every $s_k$ we have 
\begin{eqnarray} \label{indep_bk}
 \prob{\left\{\mathbf{b} \in \mathcal{M}_n:~T_\mathbf{b} (i_k,j_k) = s_k \right\} }  =  \prob{\left\{\mathbf{b} \in \mathcal{M}_n:~T_\mathbf{b} (i_k,j_k) = s_k \text{ and } b_{j_k} = r \right\} }
\end{eqnarray}
for every $r \in \{0, \dots, n-1\}$. This fact can be simply established by counting the possible configurations of $b_{j_k}$ and $b_{(j_k+i_k)_n}$. By definition, $T_\mathbf{b}(i_k,j_k)$ is independent of all $b_\ell$ with $\ell \neq j_k, (j_k+i_k)_n$. Thus for every numbers $s_1, \dots, s_k$ we have, using the independence of $b_0, \dots, b_{n-1}$ and \eqref{indep_bk}, that
\begin{eqnarray*}
& & \prob{\bigcap_{w=1}^k \left\{\mathbf{b}:~T_\mathbf{b} (i_w,j_w) = s_w \right\} } \\
& = & \sum_{r=0}^{n-1} \prob{\bigcap_{w=1}^k \left\{\mathbf{b}:~T_\mathbf{b} (i_w,j_w) = s_w \text{~and~}b_{j_k}=r \right\} } \\
& = & \sum_{r=0}^{n-1} \prob{ \underbrace{\left(\bigcap_{w=1}^{k-1} \left\{\mathbf{b} :~T_\mathbf{b} (i_w,j_w) = s_w \text{~and~}b_{j_k}=r \right\} \right)}_{\text{depends only on $b_\ell$ with $\ell \neq j_k, (j_k+i_k)_n$ when $b_{j_k}$ is fixed}} \cap \underbrace{\left\{ \mathbf{b}:~T_\mathbf{b} (i_k,j_k) = s_k \text{ and } b_{j_k} = r \right\}}_{\text{depends only on $b_{(j_k+i_k)_n}$ when $b_{j_k}$ is fixed}} } \\
& = & \sum_{r=0}^{n-1} \left( \prob{ \left(\bigcap_{w=1}^{k-1} \left\{\mathbf{b} :~T_\mathbf{b} (i_w,j_w) = s_w \text{~and~}b_{j_k}=r \right\} \right)} \prob{ \left\{\mathbf{b} :~ T_\mathbf{b} (i_k,j_k) = s_k \text{ and } b_{j_k} = r \right\} } \right) \\
& = & \sum_{r=0}^{n-1} \left( \prob{ \left(\bigcap_{w=1}^{k-1} \left\{\mathbf{b} :~T_\mathbf{b} (i_w,j_w) = s_w \text{~and~}b_{j_k}=r \right\} \right)} \prob{ \left\{ \mathbf{b} :~ T_\mathbf{b} (i_k,j_k) = s_k \right\} } \right) \\
& = & \prob{ \left(\bigcap_{w=1}^{k-1} \left\{\mathbf{b} :~T_\mathbf{b} (i_w,j_w) = s_w \right\} \right)} \prob{ \left\{ \mathbf{b} :~ T_\mathbf{b} (i_k,j_k) = s_k\right\} } .
\end{eqnarray*}
This is exactly the independence property that we wanted to establish.
\end{proof}

\section{Proof of Lemma~\ref{lemma:LowerBoundRowElements}}\label{sec:ProofLemmaRows}
 The overview of the proof is as follows. Recall that we understand the entries of the matrix $T_\mathbf{b}$ as random variables. We will prove that every row of $\Tb$ contains a ``large'' number of independent random variables. Then we give a lower bound for the expected value of the number of different elements in each row. Then we apply McDiarmid's inequality to each row and finally we use the union bound together with the exponential decay delivered by McDiarmid's inequality to guarantee that w.h.p. every row of $\Tb$ has at least $\sim (1-e^{-1}) \nmed$ different elements.
We denote by $C_n(i)$ the \textit{circulant graph} on $n$ vertices, i.e., the graph with vertex set $\zn$ where two vertices $r,s$ are adjacent if and only if $\dist{r-s} = i.$

We need the following property.
\begin{claim}\label{claim:SubsetIndependent}
	For every $i$, the $i$-th row of $\Tb$ contains a set of at least $n-\gcd(n,i)$ random variables which are i.i.d.
\end{claim}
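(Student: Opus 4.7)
The plan is to recognize the $n$ entries of the $i$-th row of $\Tb$ as indexed by the multi-set $S_i := \{(i,0),(i,1),\ldots,(i,n-1)\}$, and then to apply Proposition~\ref{prop:IndependenceAcyclic}: it suffices to exhibit a sub-multi-set $F_i \subseteq S_i$ with $|F_i| \geq n - \gcd(n,i)$ whose associated multi-graph $G(F_i)$ is acyclic.

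First I would describe the structure of the full multi-graph $G(S_i)$ on vertex set $\zn$ with edge multi-set $\{\{j,(j+i)_n\} : j \in \zn\}$. The edges within a single coset $v + \langle i \rangle$ of the subgroup $\langle i \rangle \leq \zn$ form a closed walk $v \to v+i \to v+2i \to \cdots \to v$ of length $n/\gcd(n,i)$. Since there are $\gcd(n,i)$ cosets of $\langle i \rangle$, the multi-graph $G(S_i)$ decomposes into $\gcd(n,i)$ connected components, each being a cycle in the multi-graph sense. When $i < n/2$, each edge is used with multiplicity one and the components are simple cycles of length $n/\gcd(n,i)$; when $i = n/2$ (which requires $n$ even), the edge $\{j,j+n/2\}$ arises from both indices $j$ and $(j+n/2)_n$, so each of the $n/2 = \gcd(n,n/2)$ components is a pair of parallel edges, which is still a cycle as a multi-graph.

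Next I would construct $F_i$ by deleting exactly one edge from each of the $\gcd(n,i)$ components of $G(S_i)$. The result is a spanning forest of $G(S_i)$ with $n - \gcd(n,i)$ edges, hence acyclic, and Proposition~\ref{prop:IndependenceAcyclic} immediately yields that the corresponding random variables are i.i.d. The main obstacle is more notational than substantive: in the case $i = n/2$ different indices $(i,j)$ and $(i,j')$ with $j \equiv j' \pmod{n/2}$ give rise to entries of the row that are literally equal as random variables on $\mathcal{M}_n$, but the multi-set and multi-graph formalism from Section~\ref{sec:DependenceStructureTb} is designed precisely to accommodate this, and the identity $n - \gcd(n,n/2) = n/2$ shows that the bound in the claim remains tight in this regime.
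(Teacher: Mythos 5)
Your proof is correct and takes essentially the same route as the paper: identify the associated multi-graph of row $i$ as $C_n(i)$ decomposed into $\gcd(n,i)$ cycles (cosets of $\langle i\rangle$), delete one edge per cycle to obtain an acyclic sub-multi-set of size $n-\gcd(n,i)$, and invoke Proposition~\ref{prop:IndependenceAcyclic}. The only cosmetic difference is that the paper handles $i=\frac{n}{2}$ separately by noting the first $n/2$ entries already form a matching, whereas you treat the parallel edges as $2$-cycles and apply the one-edge-per-cycle deletion uniformly.
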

\begin{proof}[\textbf{Proof}] The variables in row $i$ are given by the multi-set
\begin{equation} \label{E_i_def}
E_i(\b{}) := \{\dist{b_0-b_{i}},
\ldots,
\dist{b_k-b_{(k+i)_n}},
\ldots,
\dist{b_{n-1}-b_{i-1}}
\}.
\end{equation}
Let $i \neq \med$. By Remark~\ref{rmk:EqualEdges},
the corresponding multi-set $E_i(\b{})$
does not have repeated elements and
the associated multi-graph $G(E_i(\b{}))$
is isomorphic to the circulant graph $C_{n}(i)$. It is well known and easy to show that  $C_{n}(i)$ is a disjoint union of $\gcd(n,i)$ cycles of length $\frac{n}{\gcd(n,i)}$ \cite{BoeschConnectivitiesCirculants}. We can then obtain an acyclic set of variables by removing one edge from each of the cycles of $G(S_i)$.
The resulting set of variables is i.i.d.\ by Proposition~\ref{prop:IndependenceAcyclic}.  In the case $i =\med$, the first $\med$ variables in row $\med$
$$
E_{\med}(\b{}) = \{\dist{b_0-b_{\med}},
\ldots,
\dist{b_k-b_{(k+\med)_n}},
\ldots,
\dist{b_{\med-1}-b_{n-1}}
\}
$$
have an associated multi-graph that
is isomorphic to the circulant graph $C_{n}(\med)$, which is a disjoint union of $\med = \gcd(n,\med)$ edges. This last graph is acyclic, thus the variables are i.i.d.\ by Proposition~\ref{prop:IndependenceAcyclic}.
\end{proof}
 We prove the following lower bound
\begin{claim}\label{claim:LowerBoundERi}
	We have $\E{R_i}
	\geq
	\nmed (1-e^{-1}) - 1$, where for all $\b{} \in \mathcal{M}_n$ 
	$$R_i(\b{}) = \# \{\dist{b_0-b_{(0+i)_n}},
	\ldots,
	\dist{b_k-b_{(k+i)_n}},
	\ldots,
	\dist{b_{n-1}-b_{i-1}}
	\}$$
	(see \eqref{eq:RiB})
	is the cardinality of different elements in row $i$ of $\Tb$.
\end{claim}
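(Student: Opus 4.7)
The plan is to lower-bound $R_i(\mathbf{b})$ by the number of distinct values attained by the i.i.d.\ subset of row entries supplied by Claim~\ref{claim:SubsetIndependent}, then apply linearity of expectation to a sum of ``value $s$ is hit'' indicators, and finish with the elementary inequality $(1-2/n)^{n/2}\le e^{-1}$.

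First I would invoke Claim~\ref{claim:SubsetIndependent} to extract from the multi-set $E_i(\mathbf{b})$ (see \eqref{E_i_def}) a sub-multiset of $N := n-\gcd(n,i)$ i.i.d.\ random variables, each distributed as a generic entry $\Tb(i,j)$. Since $1\le i\le \nmed$ forces $\gcd(n,i)\le i\le \nmed$, we always have $N\ge \lceil n/2\rceil\ge n/2$. Let $R_i'(\mathbf{b})$ denote the number of distinct values appearing in this sub-multiset; by construction $R_i(\mathbf{b})\ge R_i'(\mathbf{b})$, so it is enough to bound $\E{R_i'}$ from below.

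For each potential value $s\in\{0,1,\ldots,\nmed\}$ set $X_s := \indi{s \text{ appears in the i.i.d.\ sub-multiset}}$, so that $R_i'(\mathbf{b})=\sum_{s} X_s$. By Proposition~\ref{prop:IndependenceAcyclic}, each of the $N$ i.i.d.\ variables equals $s$ with probability $m_s/n$, so linearity of expectation yields
$$\E{R_i'}=\sum_{s=0}^{\nmed}\left(1-\left(1-\frac{m_s}{n}\right)^{N}\right).$$
For every $s\in\{1,\ldots,\nmed\}$ with $s\neq n/2$ we have $m_s=2$, and the number of such indices is at least $\nmed-1$ (the exceptions being $s=0$, and $s=n/2$ only when $n$ is even). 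Using $N\ge n/2$ together with the standard bound $(1-2/n)^{n/2}\le e^{-1}$ gives $(1-2/n)^{N}\le e^{-1}$ for all those $s$.

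Dropping the (nonnegative) contributions from $s=0$ and, when $n$ is even, from $s=n/2$, I obtain
$$\E{R_i}\ \ge\ \E{R_i'}\ \ge\ (\nmed-1)(1-e^{-1})\ \ge\ \nmed(1-e^{-1})-1,$$
where the last inequality uses $1-e^{-1}<1$. No real obstacle arises; the only subtlety is the atypical mass $m_0=1$ (and $m_{n/2}=1$ for even $n$), but these terms may simply be discarded, the loss of at most $1-e^{-1}<1$ being absorbed into the ``$-1$'' slack in the target inequality.
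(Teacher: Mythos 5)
Your proposal is correct and follows essentially the same route as the paper: both invoke Claim~\ref{claim:SubsetIndependent} to extract an i.i.d.\ subset of size $n-\gcd(n,i)\ge n/2$, decompose the count of distinct row entries into indicators over the possible values $s\in\{0,\dots,\nmed\}$, use Proposition~\ref{prop:IndependenceAcyclic} for the per-value probabilities $m_s/n$, and finish with $(1-2/n)^{n/2}\le e^{-1}$. The only difference is bookkeeping: the paper writes $R_i=(\nmed+1)-\sum_d r_d^{(i)}$ and upper-bounds each miss-probability $\E{r_d^{(i)}}$ by restricting the defining product to the independent subset (handling the $m_d=1$ terms with the separate bound $(1-1/n)^{n/2}\le e^{-1/2}$), whereas you lower-bound $R_i$ directly by the distinct-count $R_i'$ on the i.i.d.\ subset and simply discard the nonnegative $m_s=1$ contributions, which is marginally cleaner and yields the same $\nmed(1-e^{-1})-1$ bound.
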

\begin{proof}[\textbf{Proof}]
 First, for every $d \in \{0, \dots, \nmed\}$, we define the random variables
$$
	\delta_j^{(i)}(\b{},d)
	:=
	1-\mathds{1}\{\dist{b_j-b_{(j+i)_n}}= d\}
	=
	\begin{cases}
		&0, \quad \mbox{ if } \dist{b_j-b_{(j+i)_n}}= d;
		\\&1, \quad \mbox{ otherwise.}
	\end{cases}
$$
and
$$	
	r_{d}^{(i)} (\b{})
	:=
	\prod_{j\in \zn}
		\delta_j^{(i)}(\b{},d)
	=
	\begin{cases}
		&0 \quad  \mbox{ if } \exists \, p,q \in \zn \mbox{ such that } 
		\dist{p,q} = i \mbox{ and } \dist{b_{p}, b_{q}} = d;
		\\
		&1, \quad \mbox{ - otherwise}.
	\end{cases}
$$
Note that $r_{d}^{(i)} (\b{})$ is zero if the number $d$ is included in the $i$-th row of $\Tb$, and that it is one otherwise. Recalling that the entries of $T_{\b{}}$ can only have values in $\{0,1,\ldots,\nmed\}$, we write the number of distinct elements in row $i$ as
\begin{equation}\label{eq:RiAlternateForm}
	R_i(\b{})
	=
	\left(\nmed + 1\right)
	-
	\sum_{d = 0}^{\nmed}
		r_d^{(i)} (\b{}).
\end{equation}
By Claim~\ref{claim:SubsetIndependent}, there is a subset $I$ of $\zn$ of cardinality $n-\gcd(n,i)$ such that the variables $\{ \delta_w^{(i)} : w\in I \}$ are i.i.d., and thus
$$
	\E{r_d^{(i)}}
	=
	\E{\prod_{j\in\zn} \delta_j^{(i)}(\b{},d)}
	\leq
	\E{\prod_{w\in I}\delta_w^{(i)}(\b{},d)}
	= \E{\delta_0^{(i)}(\b{},d)}^{n-\gcd(n,i)}.
$$
Furthermore, by Proposition~\ref{prop:IndependenceAcyclic}, we have
$
	\E{\delta_0^{(i)}(\b{},d)}
	=
	1 - \frac{m_d}{n},
$
and thus
$$
	\E{r_d^{(i)}}
	\leq
	\left(
		1 - \frac{m_d}{n}
	\right)^{n-\gcd(n,i)}
	\leq
	\left(
	1 - \frac{m_d}{n}
	\right)^{\frac{n}{2}}
	=
	\begin{cases}
		&
		\left(1 -\frac{2}{n}\right)^{\frac{n}{2}}
		,\mbox{ if } d\neq 0, \frac{n}{2};
		\\
		&
		\left(1 -\frac{1}{n}\right)^{\frac{n}{2}},
		\mbox{ otherwise },
		\\
	\end{cases}
$$
for $d\in \{0,1,\ldots, \nmed \}$. Using the inequality $1-x\leq e^{-x}$, which is valid for any real number $x$, we obtain
$$
\E{\sum_{d = 0}^{\lfloor \frac{n}{2} \rfloor}
	r_d^{(i)}}
\leq
\nmed
\underbrace{\left(1 -\frac{2}{n}\right)^{\frac{n}{2}}}_{\leq e^{-1}}
+
2
\underbrace{\left(1 -\frac{1}{n}\right)^{\frac{n}{2}}}_{\leq e^{-\frac{1}{2}}}
\leq
\nmed e^{-1} + 2.
$$
Plugging this inequality into \eqref{eq:RiAlternateForm} yields
$$
\E{R_i}
=
\left(\nmed
+
1\right)
-
\E{\sum_{d = 0}^{\lfloor \frac{n}{2} \rfloor}
r_d^{(i)}}
\geq
\nmed (1-e^{-1}) - 1.
$$
This proves Claim~\ref{claim:LowerBoundERi}.
\end{proof}
 We introduce McDiarmid's inequality to prove Claim~\ref{claim:ElementsInRow}.
\begin{defi}
	 Let $L: \left(\zn \right)^n \rightarrow \mathbb{R}$ be a function. We say that $L$ has \textit{Lipschitz coefficient}  $r \in \mathbb{R}^{+}$
	 if
	 $$
	  |L(\overrightarrow{v})-L(\overrightarrow{w})|
	  \leq
	  r
	 $$
	 for every $\overrightarrow{v},\overrightarrow{w} \in \left(\zn \right)^n$ such that $\overrightarrow{v}(j) = \overrightarrow{w}(j)$ for all $j$ except for at most one index.
\end{defi}
\begin{prop}[\textbf{McDiarmid's Inequality}  \cite{mcdiarmid1989method}]\label{prop:McDiarmid}
	Let $\bar{X} := (X_1,X_2,\ldots,X_n) \in \left(\zn \right)^n$ be a random vector, where the variables $X_1,X_2,\ldots,X_n$ are independent, and let $L: \left(\zn \right)^n  \rightarrow \mathbb{R}$ be a function with bounded Lipschitz coefficient $r$. Then	
\begin{flalign*}
		&\text{(lower tail)}
		\qquad \prob{ L(\bar{X}) \leq \E{L(\bar{X})} - r \sqrt{\lambda n}}
		\leq
		e^{-2\lambda},
\end{flalign*}	
	for all $\lambda \geq 0$.
\end{prop}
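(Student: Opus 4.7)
The plan is to prove McDiarmid's inequality by the standard Doob-martingale approach combined with (conditional) Hoeffding's lemma. First, I would set $\mathcal{F}_k := \sigma(X_1,\ldots,X_k)$ for $k=0,1,\ldots,n$ (with $\mathcal{F}_0$ trivial) and define the Doob martingale $Y_k := \E{L(\bar{X}) \mid \mathcal{F}_k}$. By the tower property, $(Y_k)_{k=0}^n$ is a martingale with respect to $(\mathcal{F}_k)_{k=0}^n$, and its endpoints are $Y_0 = \E{L(\bar{X})}$ and $Y_n = L(\bar{X})$. Hence
$$L(\bar{X}) - \E{L(\bar{X})} = \sum_{k=1}^{n} (Y_k - Y_{k-1}),$$
and the tail estimate reduces to controlling the distribution of a sum of martingale differences.

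The second step is to establish the bounded-differences property. Using the independence of $X_1,\ldots,X_n$, I would write, for any value of $X_k$,
$$Y_k = g_k(X_1,\ldots,X_{k-1},X_k), \qquad \text{where } g_k(x_1,\ldots,x_k) := \E{L(x_1,\ldots,x_k,X_{k+1},\ldots,X_n)}.$$
The Lipschitz hypothesis on $L$ (for vectors that differ in a single coordinate) transfers to $g_k$ in the $k$-th argument: for any $x_k, x_k' \in \zn$ the difference $|g_k(x_1,\ldots,x_{k-1},x_k) - g_k(x_1,\ldots,x_{k-1},x_k')|$ is bounded by $r$. Consequently there exist $\mathcal{F}_{k-1}$-measurable $A_k \le B_k$ with $B_k - A_k \le r$ such that $A_k \le Y_k \le B_k$ almost surely, i.e.\ the martingale difference $D_k := Y_k - Y_{k-1}$ lies, conditionally on $\mathcal{F}_{k-1}$, in an interval of length at most $r$.

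The third step is the conditional Hoeffding lemma: if $\E{Z \mid \mathcal{G}} = 0$ and $Z$ takes values in an interval of length $\le r$ a.s., then $\E{e^{sZ} \mid \mathcal{G}} \le \exp(s^2 r^2/8)$ for every $s \in \mathbb{R}$. Applying this to $D_k$ conditional on $\mathcal{F}_{k-1}$ and iterating the tower property,
$$\E{\exp\!\left(-s \sum_{k=1}^{n} D_k\right)} \le \exp\!\left(\frac{s^2 n r^2}{8}\right).$$
A Chernoff bound then yields
$$\prob{L(\bar{X}) - \E{L(\bar{X})} \le -t} \le \exp\!\left(\frac{s^2 n r^2}{8} - st\right)$$
for every $s > 0$. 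Optimizing in $s$ (with $s = 4t/(nr^2)$) produces $\exp(-2 t^2/(n r^2))$, and substituting $t = r\sqrt{\lambda n}$ gives exactly the claimed bound $e^{-2\lambda}$.

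The main obstacle I expect is the conditional Hoeffding lemma: although standard, its proof requires a convexity argument (bounding $e^{sx}$ on $[a,b]$ by the secant line, then controlling the resulting cumulant-generating function by its maximum over the interval), and it is precisely this argument that delivers the sharp constant $1/8$, which in turn is what produces the factor $2$ in the exponent $e^{-2\lambda}$. The bounded-differences step (step two) is straightforward modulo measurability issues, and the martingale/Chernoff machinery (steps one and four) is routine once the per-step MGF bound is in hand.
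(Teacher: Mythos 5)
Your proposal is correct, but there is nothing in the paper to compare it against: the paper does not prove this proposition, it imports it as a known result from McDiarmid's survey \cite{mcdiarmid1989method}, remarking only that the stated form is a special case of the general bounded-differences inequality. What you give is the standard proof of that cited result, and it goes through: the Doob martingale $Y_k = \E{L(\bar{X}) \mid X_1,\ldots,X_k}$ telescopes $L(\bar{X}) - \E{L(\bar{X})}$; independence lets you write $Y_k = g_k(X_1,\ldots,X_k)$ with $g_k$ an average of $L$ over the remaining coordinates, so the one-coordinate Lipschitz bound $r$ transfers and each difference $D_k$ lies, conditionally on $\mathcal{F}_{k-1}$, in an interval of length at most $r$ with conditional mean zero; the conditional Hoeffding lemma then gives the per-step factor $\exp(s^2 r^2/8)$, the tower property yields $\E{\exp(-s\sum_k D_k)} \leq \exp(s^2 n r^2/8)$, and the Chernoff bound optimized at $s = 4t/(nr^2)$ gives $\exp(-2t^2/(nr^2))$, which is exactly $e^{-2\lambda}$ at $t = r\sqrt{\lambda n}$ (and the case $\lambda = 0$ is trivial). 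The only incomplete ingredient is the conditional Hoeffding lemma itself, which you state rather than prove; since that is a textbook fact and you correctly identify it as the source of the sharp constant $1/8$ (hence of the factor $2$ in the exponent), this is an acceptable level of detail, and your argument in fact supplies a proof where the paper deliberately relies on a citation.
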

\begin{remark*}
	This is just a special case of the general form of McDiarmid's inequality. The general inequality also bounds the upper tail, and allows different Lipschitz coefficients in the respective components.
\end{remark*}
 In the following claim we use Proposition~\ref{prop:McDiarmid} to estimate the probability that row $i$ of $\Tb$ has less than $\sim (1-e^{-1}) \nmed$ different elements.
\begin{claim}\label{claim:ElementsInRow} Let $\varepsilon > 0$. Then
	$$
	\prob{\mathbf{b} \in \mathcal{M}_n:~R_i(\b{}) < \nmed (1-e^{-1} - \varepsilon)}
	\leq
	e^{-\Theta(n)},
	$$
	for $i = 1,2,\ldots,\nmed$.
\end{claim}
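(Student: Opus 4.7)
The plan is to apply McDiarmid's inequality (Proposition~\ref{prop:McDiarmid}) directly to $R_i$, viewed as a function of the i.i.d.\ random variables $b_0,b_1,\ldots,b_{n-1}$, and then combine the resulting exponential tail bound with the lower bound on $\mathbb{E}[R_i]$ furnished by Claim~\ref{claim:LowerBoundERi}. Concretely, I would define $L:(\zn)^n \to \mathbb{R}$ by $L(b_0,\ldots,b_{n-1}) := R_i(\b{})$, so that the independence assumption needed to apply Proposition~\ref{prop:McDiarmid} is automatic thanks to Remark~\ref{rmk:indep}.

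The first real step is to verify that $L$ has a small Lipschitz coefficient. Observe that each coordinate $b_j$ appears in at most two of the $n$ entries $\dist{b_k-b_{(k+i)_n}}$ that make up the multi-set $E_i(\b{})$ in \eqref{E_i_def}: namely at position $k=j$ (where $b_j$ plays the role of the left operand) and at position $k=(j-i)_n$ (where $b_j$ plays the role of the right operand). Therefore, if we modify one coordinate $b_j$ while fixing all others, at most two entries of $E_i(\b{})$ change. Since altering one entry of a multi-set changes the number of distinct values by at most $1$, we obtain $|L(\overrightarrow{v})-L(\overrightarrow{w})|\leq 2$ whenever $\overrightarrow{v}$ and $\overrightarrow{w}$ differ in a single coordinate, i.e.\ $L$ has Lipschitz coefficient $r=2$.

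Next, I would apply Proposition~\ref{prop:McDiarmid} to $L$ with the choice $\lambda = c\, \varepsilon^2 n$, where $c>0$ is a small absolute constant chosen so that $r\sqrt{\lambda n}=2\sqrt{c}\,\varepsilon n \leq \tfrac{\varepsilon}{2}\,\nmed - 1$ for all sufficiently large $n$ (any $c<\tfrac{1}{64}$ will do once $n$ is large enough). Combining this with Claim~\ref{claim:LowerBoundERi} gives
\begin{equation*}
\E{R_i} - r\sqrt{\lambda n} \;\geq\; \nmed(1-e^{-1}) - 1 - 2\sqrt{c}\,\varepsilon n \;\geq\; \nmed(1-e^{-1}-\varepsilon).
\end{equation*}
Consequently, by the lower-tail form of Proposition~\ref{prop:McDiarmid},
\begin{equation*}
\prob{R_i(\b{}) < \nmed(1-e^{-1}-\varepsilon)} \;\leq\; \prob{L(\bar{X}) \leq \E{L(\bar{X})} - r\sqrt{\lambda n}} \;\leq\; e^{-2\lambda} \;=\; e^{-\Theta(n)},
\end{equation*}
which is the desired estimate.

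The only delicate point in the argument is the Lipschitz bound: one must be careful with the case $i=n/2$ (when $n$ is even), in which the pairing $k \leftrightarrow (k+i)_n$ is an involution and a single $b_j$ participates in the two symmetric entries $\dist{b_j-b_{(j+n/2)_n}}$ and $\dist{b_{(j+n/2)_n}-b_j}$; these two entries are equal, so modifying $b_j$ changes at most two (in fact two equal) entries of $E_i(\b{})$, and the bound $r\leq 2$ still holds. Once this case check is in place, the rest is an entirely mechanical application of Proposition~\ref{prop:McDiarmid} combined with Claim~\ref{claim:LowerBoundERi}.
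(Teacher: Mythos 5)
Your proof is correct and follows the same strategy as the paper's: verify that $R_i$ has Lipschitz coefficient $2$ as a function of the independent coordinates $b_0,\ldots,b_{n-1}$ (changing one $b_j$ alters at most the two entries $\dist{b_j-b_{(j+i)_n}}$ and $\dist{b_{(j-i)_n}-b_j}$ of the multi-set $E_i(\mathbf{b})$), apply the lower-tail McDiarmid bound, and combine with Claim~\ref{claim:LowerBoundERi}. The only cosmetic difference is the choice of $\lambda$: the paper takes $\lambda_\varepsilon(n)=\frac{1}{4n}(\varepsilon\nmed-1)^2$ exactly, while you take $\lambda=c\varepsilon^2 n$ with a small absolute constant $c$; both are $\Theta(n)$ and yield the same bound.
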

\begin{proof}[\textbf{Proof}]
	 Let $\mathbf{b}=(b_0,b_1,\ldots,b_{n-1})$. Let $E_i(\mathbf{b})$ be defined as in \eqref{E_i_def}. The function $R_i(\mathbf{b}) := \# E_i(\mathbf{b})$ has Lipschitz coefficient 2: changing one $b_j$ affects at most two entries, namely $\dist{b_j-b_{(j+i)_n}}$ and $\dist{b_{(j-i)_n}-b_{j}}$. Using McDiarmid's inequality, we deduce that
	$$
	\prob{\mathbf{b} \in \mathcal{M}_n:~R_i(\b{}) \leq \E{R_i} - 2\sqrt{\lambda n}}
	\leq
	e^{-2\lambda},\quad \, \forall \lambda \geq 0.
	$$
	Using the lower bound $\E{R_i} \geq \nmed (1-e^{-1}) -1 $ of Claim~\ref{claim:LowerBoundERi} we obtain
	\begin{eqnarray*}
	& & \prob{\mathbf{b} \in \mathcal{M}_n:~R_i(\b{}) < \left(\nmed (1-e^{-1}) - 1\right) - 2\sqrt{\lambda n}}  \\
	& \leq & \prob{\mathbf{b} \in \mathcal{M}_n:~R_i (\b{}) \leq \E{R_i} - 2\sqrt{\lambda n}} \\
	& \leq &	e^{-2\lambda},\quad \, \forall \lambda \geq 0.
	\end{eqnarray*}
	Let $\varepsilon>0$
	and let
	\begin{equation}\label{eq:Lamdavarepsilon}
	\lambda_{\varepsilon}(n)
	:=
	\frac{1}{4n}\left( \varepsilon \nmed - 1 \right)^2
	=
	\Theta(n);
	\end{equation}
	we observe that $\lambda_{\varepsilon}(n)$ is independent of $i.$ Let $n > \frac{2}{\varepsilon}$, then plugging $\lambda = \lambda_{\varepsilon}(n) $ into the previous inequality yields
\begin{equation}
	\prob{\mathbf{b} \in \mathcal{M}_n:~R_i(\b{}) < \nmed (1-e^{-1} - \varepsilon)}
	\leq
	e^{-2\lambda_{\varepsilon}(n)}
	=
	e^{-\Theta(n)}.
\end{equation}
\end{proof}
Recall that $\RU$ contains those $\mathbf{b} \in \mathcal{M}_n$ for which every row of $\Tb$ has at least $\alpha \nmed$ different elements, so that
$$\cRU = \bigcup_{i = 0}^{\nmed} \left\{  \mathbf{b} \in \mathcal{M}_n:~R_i(\b{}) < \alpha \nmed
\right\}.$$
Let $\varepsilon > 0$ be arbitrary and let $\alpha^* = 1-e^{-1}-\varepsilon$. Then
\begin{eqnarray}
\prob{\cRUstar} & = & \prob{\bigcup_{i=1}^{\nmed} \left\{\mathbf{b} \in \mathcal{M}_n:~ R_i(\b{}) < \alpha^\star \nmed \right\}} \nonumber\\
& \leq & 
\sum_{i = 1}^{\nmed} \prob{\mathbf{b} \in \mathcal{M}_n:~ R_i(\b{}) < \alpha^\star \nmed} \nonumber\\
& \leq & 
n  e^{-\Theta(n)}, \label{eq:ExplicitBoundRows}
\end{eqnarray}
where we use 
Claim~\ref{claim:ElementsInRow} for the second inequality. 
The proof of Lemma~\ref{lemma:LowerBoundRowElements} then follows by noticing that
$$
n e^{-\Theta(n)}=O\left(\frac{1}{n}\right).
$$

\section{Proof of Lemma~\ref{lemma:ConcentrationOfD}}\label{sec:ProofLemmaZeros}
The overview of the proof is as follows. We will define two random variables $\Z_0(\b{})$ and $\Z_1(\b{})$ such that
\begin{align*}
	&\bullet \D(\mathbf{b}) \geq \Z_0(\mathbf{b})-\Z_1(\mathbf{b}),\quad \forall \, \b{}:\zn \rightarrow \zn;
	\\&\bullet \E{\Z_0-\Z_1} \sim \frac{n}{2}.
\end{align*}
Then we will show that $\Z_0$ and $\Z_1$ concentrate around their respective means, and use this fact to give an upper bound on the probability that $\D$ is small. For this purpose, we note the following property.
\begin{claim}\label{claim:ConcentrationReduction}
Let $\Z_0, \Z_1$ and $\D$ be random variables which take non-negative values, such that $\D \geq \Z_0-\Z_1$.
Let $\nu > 0$ and let $\delta \leq \E{\Z_0 -\Z_1} - 2 \nu$.
Then
$$
\prob{\D < \delta}
\leq
\prob{\Z_0 < \E{\Z_0} - \nu}
+
\prob{\Z_1 > \E{\Z_1} + \nu}.
$$
\end{claim}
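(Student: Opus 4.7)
The plan is to prove the claim by establishing a set-theoretic inclusion of events and then applying the union bound. The key observation is that the hypothesis $\D \geq \Z_0 - \Z_1$ together with the assumption $\delta \leq \E{\Z_0 - \Z_1} - 2\nu$ makes the event $\{\D < \delta\}$ automatically force a large deviation in at least one of $\Z_0$ or $\Z_1$ from its mean.

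Concretely, I would argue by contrapositive at the pointwise level. Suppose that for some outcome we have simultaneously $\Z_0 \geq \E{\Z_0} - \nu$ and $\Z_1 \leq \E{\Z_1} + \nu$. Subtracting the second inequality from the first yields
$$
\Z_0 - \Z_1 \;\geq\; \E{\Z_0} - \E{\Z_1} - 2\nu \;=\; \E{\Z_0 - \Z_1} - 2\nu \;\geq\; \delta,
$$
where the last step uses the assumption $\delta \leq \E{\Z_0 - \Z_1} - 2\nu$. Since $\D \geq \Z_0 - \Z_1$ by hypothesis, this forces $\D \geq \delta$. Contrapositively, the event $\{\D < \delta\}$ must be contained in $\{\Z_0 < \E{\Z_0} - \nu\} \cup \{\Z_1 > \E{\Z_1} + \nu\}$.

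The conclusion then follows immediately from monotonicity of $\mathbb{P}$ and the union bound. No independence assumption between $\Z_0$ and $\Z_1$ is needed, and the non-negativity hypothesis on the variables is not actually used in the argument (it is just part of the natural setting in which this claim will later be applied to $\D$, $\Z_0$, $\Z_1$). There is no real obstacle here; the claim is essentially a deterministic decomposition lemma dressed up probabilistically, and the only thing to be careful about is tracking signs correctly when distributing the $2\nu$ slack symmetrically as $\nu + \nu$ between the two tail events.
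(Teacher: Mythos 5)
Your argument is correct and is exactly the argument the paper has in mind (the paper's proof is just the one-line remark "this follows easily from $\Z_0 - \Z_1 \leq \D$ and the union bound"); you have simply spelled out the pointwise contrapositive inclusion $\{\D < \delta\} \subseteq \{\Z_0 < \E{\Z_0} - \nu\} \cup \{\Z_1 > \E{\Z_1} + \nu\}$. Your side remark that non-negativity is not actually used is also accurate.
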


\begin{proof}[\textbf{Proof}]
	 This follows easily from the assumption that $\Z_0 - \Z_1 \leq \D$ and the union bound.
\end{proof}

 \noindent To prove concentration of $\Z_0$ and $\Z_1$ around their respective means, we use Chebyschev's inequality. Notice that	$\D:\zn^n \rightarrow \zn$ does not have a bounded Lipschitz coefficient, so we cannot use McDiarmid's inequality to guarantee its concentration.

\subsection{ Lower bound for $\D(b)$ }\label{subsec:LowerBoundD}
Recall that $\D(\mathbf{b})$ counts the number of rows of $T_\mathbf{b}$ that contain at least one zero. Let
$$
	z_i
	=
	z_i(\mathbf{b})
	:=
	\#(\mbox{Zeros in row $i$ of $T_\mathbf{b}$})
$$
and
$$
\Z_0(\mathbf{b})
:=
\#(\mbox{Zeros in } T_\mathbf{b})
=
\sum_{(i,j) \in [1,\nmed] \times [0,n-1]}
\indi{T_\mathbf{b}(i,j)=0}.
$$
Then
\begin{equation}\label{eq:DMaxForm}
 \D(\mathbf{b})
 =
 \Z_0  (\mathbf{b})
 -
 \sum_{i=1}^{\nmed}
 \max(z_i -1, 0)   .
\end{equation}
It is easy to verify that the number of non-ordered pairs of entries in the $i$-th row with zero value is
$$
	\sum_{0 \leq j < j' \leq n-1}
	\indi{T_\mathbf{b}(i,j) = 0}
	\indi{T_\mathbf{b}(i,j') = 0}
	=
	\frac{z_i(z_i - 1)}{2}
	\geq
	\max(z_i-1, 0),
	\quad
	\forall i,
$$
therefore
$$
	\Z_1(\mathbf{b})
	:=
	\sum_{i = 1}^{\nmed}
	\sum_{0 \leq j < j' \leq n-1}
	\indi{T_\mathbf{b}(i,j) = 0}
	\indi{T_\mathbf{b}(i,j') = 0}
	\geq
	\sum_{i=1}^{\nmed}
	\max(z_i -1, 0).
$$
From this and \eqref{eq:DMaxForm}, we conclude that

\begin{claim} \label{claim:LowerBoundD}
$
	\D(\mathbf{b})
	\geq
	\Z_0(\mathbf{b}) - \Z_1(\b{}),
	\quad                               
	\forall \, \b{}:\zn \rightarrow \zn.
$
\end{claim}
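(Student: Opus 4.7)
The plan is to verify the inequality by directly combining the two identities already established immediately above the claim statement. Specifically, the paper has shown that
\[
\D(\mathbf{b}) = \Z_0(\mathbf{b}) - \sum_{i=1}^{\nmed} \max(z_i-1, 0),
\]
which expresses the number of rows with at least one zero as the total count of zeros minus the ``excess'' zeros (those beyond the first one in each row). It has also been noted that for every row $i$,
\[
\frac{z_i(z_i-1)}{2} \geq \max(z_i - 1, 0).
\]
So the only thing left to do is to sum this inequality over $i$ and substitute into the identity for $\D(\mathbf{b})$.

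Concretely, I would first justify the elementary per-row inequality $\binom{z_i}{2} \geq \max(z_i - 1, 0)$ by splitting into cases: when $z_i \in \{0,1\}$ both sides equal $0$; when $z_i \geq 2$ we have $z_i(z_i-1)/2 \geq z_i - 1$ because $z_i/2 \geq 1$. Then summing over $i = 1, \dots, \nmed$ yields
\[
\Z_1(\mathbf{b}) = \sum_{i=1}^{\nmed} \frac{z_i(z_i-1)}{2} \geq \sum_{i=1}^{\nmed} \max(z_i - 1, 0).
\]
Subtracting this from $\Z_0(\mathbf{b})$ and using the identity for $\D(\mathbf{b})$ gives
\[
\D(\mathbf{b}) = \Z_0(\mathbf{b}) - \sum_{i=1}^{\nmed} \max(z_i - 1, 0) \geq \Z_0(\mathbf{b}) - \Z_1(\mathbf{b}),
\]
which is the claim.

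There is no real obstacle here; the claim is a bookkeeping consequence of counting zeros per row against pairs of zeros per row. The only mild point of care is the case $z_i = 0$, where one must check that $\max(z_i - 1, 0) = 0$ (not $-1$), so that the decomposition of $\D(\mathbf{b})$ remains correct for rows with no zeros at all.
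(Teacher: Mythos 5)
Your proof is correct and follows exactly the same route as the paper: it combines the identity $\D(\mathbf{b}) = \Z_0(\mathbf{b}) - \sum_{i=1}^{\nmed} \max(z_i-1,0)$ from equation~\eqref{eq:DMaxForm} with the per-row bound $\tfrac{z_i(z_i-1)}{2} \geq \max(z_i-1,0)$ and the definition of $\Z_1$. Your case analysis for the elementary inequality is a fine way to make the observation fully explicit, but there is no substantive difference from the paper's argument.
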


\subsection{Estimates for $\E{\Z_0}$, $\E{\Z_1}$,  $\E{\Z_0 - \Z_1}$,  $\var{\Z_0}$, $\var{\Z_1}$}
In this subsection we prove that
\begin{itemize}
\item $\E{\Z_0-\Z_1}\sim \frac{n}{2},$
\item $\E{\Z_0} = \Theta(n),$
\item $\E{\Z_1} = \Theta(n),$
\item $\var{\Z_0} = O(n),$ and
\item $\var{\Z_1} = O(n)$.
\end{itemize}

\noindent For the rest of this subsection, we use the notation
$$
	y_{i,j}
	=
	y_{i,j}(\b{})
	:=
	\indi{\Tb(i,j) = 0},
$$
for $1\leq i\leq \nmed$ and $0 \leq j \leq  n-1.$
\begin{defi}
	The variables $y_{i_1,j_1},y_{i_2,j_2}\ldots, y_{i_k,j_k}$ are called \textit{acyclic} if the multi-set $ \bigcup_{w=1}^{k}\{(i_w,j_w)\}$ is acyclic. Let $$
	G\left(\{y_{i_1,j_1},y_{i_2,j_2}\ldots, y_{i_k,j_k}\}\right) = G\left(\bigcup_{w=1}^{k}\{(i_w,j_w)\}\right)$$ be the \textit{associated multi-graph} of the multi-set $\{y_{i_1,j_1},y_{i_2,j_2}\ldots, y_{i_k,j_k}\}$ and let $e(y_{i,j}):= \{j,(j+i)_n\}$ be the \textit{associated edge} to $y_{i,j}$. The \textit{length} of $e(y_{i,j})$ is $ \dist{j-(j+i)_n} = i.$
\end{defi}
\begin{remark}\label{rmk:IndepenceYij}
	If the variables  $y_{i_1,j_1},y_{i_2,j_2}\ldots, y_{i_k,j_k}$ are acyclic then they are i.i.d.; this is an immediate consequence of Proposition~\ref{prop:IndependenceAcyclic}.
\end{remark}
 We begin with the easy part: the bounds for the expected values.
\begin{claim}
	\label{claim:FirstMoments}
	Let $n \in \mathbb{N}$. We have $\E{\Z_0} = \Theta(n)$, $\E{\Z_1} = \Theta(n)$, and
 	$\E{\Z_0-\Z_1} \geq \frac{1}{2} \nmed - 1.$
\end{claim}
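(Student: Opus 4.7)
The plan is to evaluate the three expectations directly from their definitions by linearity, using Proposition~\ref{prop:IndependenceAcyclic} to identify independent indicator variables $y_{i,j} = \indi{\Tb(i,j)=0}$.

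\textbf{Computing $\E{\Z_0}$.} Each singleton $\{(i,j)\}$ is trivially acyclic, so Proposition~\ref{prop:IndependenceAcyclic} gives $\E{y_{i,j}} = m_0/n = 1/n$. Summing over the $n\nmed$ admissible indices yields $\E{\Z_0} = \nmed = \Theta(n)$.

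\textbf{Computing $\E{\Z_1}$.} For each pair $0 \leq j < j' \leq n-1$ in row $i$, I compute $\E{y_{i,j}y_{i,j'}}$ by cases. If $\{(i,j),(i,j')\}$ is acyclic, Proposition~\ref{prop:IndependenceAcyclic} gives $\E{y_{i,j}y_{i,j'}} = 1/n^2$; by Remark~\ref{rmk:EqualEdges}, the only exceptional situation is when $n$ is even, $i = \med$, and $j' = j + \med$, in which case $y_{i,j} = y_{i,j'}$ and $\E{y_{i,j}y_{i,j'}} = 1/n$. Summing over the $\binom{n}{2}$ pairs in each row and then over rows produces, for $n$ odd, $\E{\Z_1} = (n-1)^2/(4n)$, and for $n$ even, $\E{\Z_1} = (n-1)(n+2)/(4n)$, both $\Theta(n)$.

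\textbf{Lower bound on $\E{\Z_0 - \Z_1}$.} Subtracting the closed forms from the previous two steps gives $\E{\Z_0-\Z_1} = (n^2-1)/(4n)$ for $n$ odd and $\E{\Z_0-\Z_1} = (n^2-n+2)/(4n)$ for $n$ even. A one-line algebraic comparison (cross-multiplying by $4n$, the inequalities reduce to $5n\geq 1$ and $3n+2\geq 0$ respectively) confirms that both quantities are at least $\frac{1}{2}\nmed - 1$.

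The computation is essentially routine; the only subtlety is isolating the degenerate case ($n$ even, $i = \med$), which is the sole situation in which Proposition~\ref{prop:IndependenceAcyclic} does not directly certify independence among same-row variables. Remark~\ref{rmk:EqualEdges} makes this transparent, and the exceptional pairs contribute only a lower-order correction to $\E{\Z_1}$ and hence to the lower bound on $\E{\Z_0 - \Z_1}$.
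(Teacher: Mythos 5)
Your proof is correct and follows essentially the same route as the paper: compute $\E{\Z_0}$ by linearity, compute $\E{\Z_1}$ by linearity with a parity split to isolate the exceptional pairs in row $\med$ (where, by Remark~\ref{rmk:EqualEdges}, $y_{\med,j}=y_{\med,j+\med}$), and then subtract. The only cosmetic difference is that you carry exact closed forms and verify the inequality by cross-multiplication, whereas the paper stops at the upper bound $\E{\Z_1}\le\frac{1}{2}\nmed+1$; both yield the stated bound.
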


\begin{proof}[\textbf{Proof}]
	 Using the linearity of the expectation, we get that
	
\begin{equation}\label{eq:EZ0}
	\E{\Z_0}
	=
	\sum_{(i,j) \in [1,\nmed] \times [0,n-1]}
	\E{y_{i,j}}
	=
	\nmed n \frac{1}{n}
	=
	\nmed
	=
	\Theta(n),
\end{equation}
	where for the second equality we use that
	\begin{equation}\label{eq:Ey}
	\E{y_{i,j}}
	=
	\prob{\{\mathbf{b}:~T_\mathbf{b} (i,j) = 0\} }
	=
	\prob{\{ \mathbf{b}:~{b_j = b_{(j+i)}} \}}
	=
	\frac{1}{n}.
	\end{equation}
	Now we calculate an upper bound for $\E{\Z_1}$, depending on the parity of $n$.\\
	
	Case 1: $n$ odd. Every product $y_{i,j}y_{i,j'}$ in the sum
	$$
	\Z_1
	=
	\sum_{i = 1}^{\nmed}
	\sum_{0 \leq j < j' \leq n-1}
	y_{i,j}y_{i,j'}
	$$
	is formed of independent random variables $y_{i,j}$,  $ y_{i,j'}$ by Remarks~\ref{rmk:EqualEdges},\ref{rmk:IndepenceYij}. Thus
	\begin{align*}
	\E{\Z_1}
	=
	\sum_{i = 1}^{\nmed}
	\sum_{0 \leq j < j' \leq n-1}
	\E{y_{i,j}y_{i,j'}}
	&=
	\sum_{i = 1}^{\nmed}
	\sum_{0 \leq j < j' \leq n-1}
	\E{y_{i,j}}\E{y_{i,j'}}
	\\&\stackrel{\mbox{\tiny \eqref{eq:Ey}}}{=}
	\nmed {n \choose 2}
	\frac{1}{n^2}
	\\&=
	\underbrace{\frac{1}{2}\nmed \left(1 - \frac{1}{n}\right)}_{\leq \frac{1}{2}\nmed}
	=
	\Theta(n).
	\end{align*}
	
	Case 2: $n$ even. Using Remark~\ref{rmk:EqualEdges}, we write $\Z_1$ as
	$$
	\Z_1
	=
	\sum_{\substack{1\leq i<\med \\0 \leq j < j' \leq n-1}}
	y_{i,j}y_{i,j'}	
	+
	\sum_{\substack{0\leq r < r' \leq n-1\\ r\not\equiv r' \Mod{n/2}}}
	y_{n/2,r}y_{n/2,r'}
	+
	\sum_{s = 0}^{\frac{n}{2} - 1}
	y_{n/2,s}.
	$$
	Every product $y_{i,j}y_{i,j'}$ in the first sum is formed of independent variables $y_{i,j}$, $y_{i,j'}$ by Remark~\ref{rmk:EqualEdges} and the same is valid for the products $y_{\med,r}y_{\med,r'}$ in the second sum, therefore
	\begin{align*}
	\E{\Z_1}
	&=
	\sum_{i = 1}^{\frac{n}{2} - 1}
	\sum_{0 \leq j < j' \leq n-1}
	\E{y_{i,j}}\E{y_{i,j'}}	
	+
	\sum_{\substack{0\leq r < r' \leq n-1\\ r\not\equiv r' \Mod{n/2}}}
	\E{y_{n/2,r}}\E{y_{n/2,r'}}
	+
	\sum_{s = 0}^{\frac{n}{2} - 1}
	\E{y_{n/2,s}}
	\\&=
	\sum_{i = 1}^{\frac{n}{2} - 1}
	\sum_{0 \leq j < j' \leq n-1}
	\frac{1}{n^2}	
	+
	\sum_{\substack{0\leq r < r' \leq n-1\\ r\not\equiv r' \Mod{n/2}}}
	\frac{1}{n^2}
	+
	\sum_{s = 0}^{\frac{n}{2} - 1}
	\frac{1}{n}
	\\&=
	\left( \frac{n}{2} - 1 \right)
	\cdot
	{n \choose 2}
	\cdot
	\frac{1}{n^2}
	+
	\left( {n \choose 2} - \frac{n}{2} \right)
	\cdot \frac{1}{n^2}
	+
	\frac{n}{2}\cdot\frac{1}{n}
	\\& =
	\underbrace{	\frac{1}{2} \cdot \frac{n}{2} \cdot
	\left(
	1-\frac{1}{n}
	+\left( \frac{2}{n}
	- \frac{2}{n^2}\right)
	\right)}_{\leq
	\frac{1}{2} \cdot \frac{n}{2} +1}
	=
	\Theta(n).
	\end{align*}
	We deduce from the previous cases that $\E{\Z_1} = \Theta(n)$ and $\E{\Z_1} \leq \frac{1}{2} \nmed + 1$ for all $n$. Using this last inequality and \eqref{eq:EZ0}, we conclude that
	\begin{equation*}\label{eq:LowerBoundEZ}
	\E{\Z_0}-\E{\Z_1} = \nmed - \E{\Z_1} \geq \frac{1}{2} \nmed - 1.
	\end{equation*}
	This concludes the proof of Claim~\ref{claim:FirstMoments}.
\end{proof}

 Now we estimate the variance of $\Z_0$ and $\Z_1$.

\begin{claim}\label{claim:SecondMoments} Let $n \in \mathbb{N}$, then
$\var{\Z_0} = O(n)$ and    $\var{\Z_1} = O(n)$.
\end{claim}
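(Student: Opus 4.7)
The plan is to expand both variances as sums of covariances and invoke Proposition~\ref{prop:IndependenceAcyclic}: any family of indicators $y_{i,j}=\indi{\Tb(i,j)=0}$ whose associated multi-set of indices is acyclic is jointly independent, hence the corresponding covariances vanish. The task then reduces to enumerating \emph{cyclic} index configurations and bounding each surviving covariance via the rank formula $\prob{\bigcap_k\{y_{i_k,j_k}=1\}}=n^{-(v-c)}$, where $v$ and $c$ are the vertex and component counts of the simple graph spanned by the listed edges.

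For $\var{\Z_0}=\sum \cov{y_{i,j},y_{i',j'}}$ the diagonal part is $\sum_{(i,j)}\var{y_{i,j}}\leq \sum_{(i,j)}\E{y_{i,j}}=\nmed=O(n)$. For the off-diagonal terms, Remark~\ref{rmk:EqualEdges} tells us that a pair of distinct indices $(i,j)\neq (i',j')$ is cyclic only in the degenerate regime $n$ even, $i=i'=\frac{n}{2}$, $j\equiv j'\pmod{n/2}$; there are $O(n)$ such pairs and in each one $y_{i,j}$ and $y_{i',j'}$ coincide as random variables, so the covariance equals $\var{y_{i,j}}=O(1/n)$. The total off-diagonal contribution is thus $O(1)$, yielding $\var{\Z_0}=O(n)$.

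For $\var{\Z_1}$ the same philosophy applies but with four edges per pair of summands. Writing $w_{i,j,j'}:=y_{i,j}y_{i,j'}$, the diagonal part is $O(n)$ since $\E{w_{i,j,j'}}=1/n^2$ for all but $O(n)$ degenerate triples. The off-diagonal sum splits naturally along the authors' notations $\sumOne$ and $\sumOneEven$ into a sum of $\cov{w_{i,j,j'},w_{r,s,s'}}$ over quadruples of edges $e_1=\{j,j+i\}$, $e_2=\{j',j'+i\}$, $e_3=\{s,s+r\}$, $e_4=\{s',s'+r\}$. By Proposition~\ref{prop:IndependenceAcyclic} only quadruples whose multi-graph contains a cycle contribute. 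The rank-$3$ configurations — namely, one edge identification (say $e_a=e_b$ for some $a\in\{1,2\}$, $b\in\{3,4\}$) with the remaining three edges forming a forest, or four distinct edges forming a single $4$-cycle in $\zn$ — each contribute a covariance bounded by $n^{-3}$, and a count shows there are at most $O(n^4)$ of them, totalling $O(n)$. Rank-$2$ configurations (two edge identifications, or a triangle among three distinct edges) and rank-$1$ configurations are far rarer and together contribute $O(1)$.

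The main obstacle is the systematic combinatorial accounting of these cyclic quadruples of edges in a circulant setting: one must enumerate all the ways that two edges of cyclic length $i$ and two of cyclic length $r$ can close a cycle modulo $n$, including pure edge identifications (which arise only in the length-$\frac{n}{2}$ regime singled out by $\sumOneEven$), $4$-cycles with alternating or consecutive length patterns, and triangles produced when two same-length edges share a vertex and a third edge of complementary length completes the loop, all subject to the constraints $j<j'$, $s<s'$ and $(i,j,j')\neq(r,s,s')$. The $n$-odd case is cleanest, since every pair of distinct indices is simple; for even $n$ I would isolate the multi-edge contributions at $i=\frac{n}{2}$ as boundary corrections (precisely the terms excluded in $\sumOneEven$ and treated separately). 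Once each type of cyclic quadruple has been counted with sharp enough constants and matched against its covariance bound, both estimates $\var{\Z_0}=O(n)$ and $\var{\Z_1}=O(n)$ follow.
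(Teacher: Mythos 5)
Your overall plan is the same as the paper's: expand the variance, invoke Proposition~\ref{prop:IndependenceAcyclic} to kill the covariance of acyclic quadruples, and then count the cyclic quadruples against the probability of the underlying zero-event, which you express cleanly as $n^{-(v-c)}$ (this is an exact repackaging of the paper's case formula~\eqref{eq:EY}: a $4$-cycle or triangle-with-pendant or $3$-forest-with-one-double gives $1/n^{3}$, a triangle-with-a-double or double-path or two disjoint doubles gives $1/n^{2}$). Your $\var{\Z_0}$ argument also matches the paper's treatment, including the $i=\tfrac{n}{2}$ repeated-edge correction for even $n$.

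The gap is that the combinatorial accounting you flag as ``the main obstacle'' is exactly the content of the paper's Claim~\ref{claim:Bounds1Isomorphism}, and it is the substance of the proof; what you have written is the scaffolding around it, not the proof itself. The paper reduces the enumeration to a finite list of multi-graph isomorphism types ($G_1,\dots,G_{12}$ for the quadratic sum, and $G_{13},G_{14},G_{15}$ for the even-$n$ cross terms involving $y_{n/2,w}$), and for the non-trivial types ($G_3,G_9,G_{10},G_{11},G_{12}$) it derives the counts $O(n^4)$ or $O(n^3)$ from circulant arithmetic: every edge has a fixed cyclic length, so once you fix a vertex the other endpoint of an edge has at most two choices, and for a $4$-cycle or a shared-vertex configuration this collapses degrees of freedom. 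That arithmetic is not automatic; your assertion that ``a count shows there are at most $O(n^4)$'' rank-$3$ configurations is precisely the claim that has to be proved, and the triangle-plus-free-edge type $G_3$ in particular requires the observation that the free edge has constrained length inherited from the triangle. Likewise, your statement that rank-$2$ configurations ``together contribute $O(1)$'' is a sharper claim than the paper establishes (the paper's loose bound $O(n^3)\cdot n^{-2}=O(n)$ already suffices); it is probably true after a careful case analysis but you give no argument, and the paper deliberately does not pursue it because it is unnecessary. Finally, for $n$ even the paper has to decompose $\Z_1$ so that the squared terms $y_{n/2,j}^2 = y_{n/2,j}$ are pulled out as a linear piece and their cross-covariances with the quadratic piece are bounded via a separate list of graphs ($G_{13},G_{14},G_{15}$); you only gesture at this as a ``boundary correction'' without showing it goes through. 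In short: right plan, right rank formula, but the combinatorial core that makes the plan into a proof is missing.
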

\begin{proof}[\textbf{Proof}]
Here we also divide the calculations according to the parity of $n$.\\

 Case 1: $n$ odd. We expand the variance of $\Z_0$ to get that
$$
	\var{\Z_0}
	=
	\sum_{\substack{
			1\leq i \leq \nmed
			\\
			0 \leq j \leq n-1}}
		\var{y_{i,j}}
	+
	\sum_{\substack{
			1\leq i,i' \leq \nmed
			\\
			0 \leq j, j' \leq n-1\\
			(i,j)\neq(i',j')}}
	\cov{y_{i,j},y_{i',j'}},
$$
where the covariances are calculated among pairs of independent variables $y_{i,j},y_{i',j'}$ due to Remark~\ref{rmk:EqualEdges}. Thus
$$
\var{\Z_0}
=
\sum_{\substack{
		1\leq i \leq \nmed
		\\
		0 \leq j \leq n-1}}
\var{y_{i,j}}.
$$
We notice that $y_{i,j}^2 = y_{i,j}$ because $y_{i,j} \in \{0,1\}$, therefore
\begin{equation}\label{eq:Vy}
 \var{y_{i,j}}
 =
 \E{y_{i,j}^2} - \E{y_{i,j}}^2
 =
 \frac{1}{n}
 -
 \frac{1}{n^2}, \quad \forall \, n \in \mathbb{N},
\end{equation}
where we use \eqref{eq:Ey} in the last equality. Then, for all $n$ odd, we get that
\begin{equation}\label{eq:VZ0OddO(n)}
\var{\Z_0}
=
\nmed
n
\left(	 \frac{1}{n}
	-
	\frac{1}{n^2}\right)
=
\nmed \left( 1- \frac{1}{n} \right)
= O(n).
\end{equation}
Now we calculate
\begin{equation}\label{eq:VZ1OddDef}
\var{\Z_1}
=
\sum_{\substack{1 \leq i\leq \nmed \\0 \leq j < j' \leq n-1}}
\var{y_{i,j}y_{i,j'}}
+
\sum_{\sumOne}
\cov{y_{i,j}y_{i,j'},y_{r,s}y_{r,s'}};
\end{equation}
We first note that
\begin{equation}
	\var{y_{i,j}y_{i,j'}}
	=
	\E{y_{i,j}^2 y_{i,j'}^2}
	-
	\E{y_{i,j} y_{i,j'}}^2
	=
	\frac{1}{n^2}
	-
	\frac{1}{n^4},
	\quad
	\mbox{ for } n
	\mbox{ odd and }
	\forall i \mbox{ and }j\neq j';
\end{equation}
this follows since the variables $y_{i,j}$ and $y_{i,j'}$ are different and therefore independent (see Remark~\ref{rmk:EqualEdges}). Thus
\begin{equation}\label{eq:UpperBoundFirstSumVZ1Odd}
\sum_{\substack{1 \leq i\leq \nmed \\0 \leq j < j' \leq n-1}}
	\var{y_{i,j}y_{i,j'}}
=
\nmed
{n \choose 2}
\frac{1}{n^2}
\left(
	1-\frac{1}{n^2}
\right)
=
O(n).
\end{equation}

 For the sum of the covariances, we proceed as follows: if the variables $y_{i,j},y_{i,j'},y_{r,s},y_{r,s'}$ are acyclic then they are independent (see Proposition~\ref{prop:IndependenceAcyclic}), therefore
$$
\cov{y_{i,j}y_{i,j'},y_{r,s}y_{r,s'}} = 0.
$$
On the other hand, if the variables $y_{i,j},y_{i,j'},y_{r,s},y_{r,s'}$ are not acyclic, let
$$
\mathcal{Y} :=
\left\{
	\{y_{i,j},y_{i,j'},y_{r,s},y_{r,s'}\} : (i,j,j') \neq (r,s,s'), \, j < j',\,
	s< s'
\right\},
$$ 
and let  $$Y = \{y_{i,j},y_{i,j'},y_{r,s},y_{r,s'}\} \in \mathcal{Y}.$$ Then  $G(Y)$ is a multi-graph with four edges $e(y_{i,j}),e(y_{i,j'}),e(y_{r,s}),e(y_{r,s'})$ such that $e(y_{i,j})\neq e(y_{i,j'})$ and $e(y_{r,s})\neq e(y_{r,s'})$ (see Remark~\ref{rmk:EqualEdges}). In particular, there cannot be 3 equal edges. If $G(Y)$ has at least one cycle, it is isomorphic to one of the multi-graphs in Figure~\ref{fig:Nodd} below.    \\
	\begin{figure}[!htbp]
		\centering
	\begin{minipage}[b]{\porcion \textwidth}
		\centering
		\begin{tikzpicture}
		\tikzstyle{every node}=[draw,circle,fill=black,minimum size=2.5pt,
			inner sep=0pt]
			\draw (0,0) node (11) [] {}
			-- ++(0:\largo ) node (12) [] {}
			-- ++(90:\largo ) node (13) [] {}
			-- ++(180:\largo ) node (14)[] {}
			-- (11);
			\end{tikzpicture}
			\caption*{\tiny $G_1$}	
	\end{minipage}
	\hspace{0.5cm}
	\begin{minipage}[b]{\porcion \textwidth}
		\centering
		\begin{tikzpicture}
		\tikzstyle{every node}=[draw,circle,fill=black,minimum size=2.5pt,
		inner sep=0pt]
		\draw (0,0) node (21) [] {}
		-- ++(0:\largo) node (22) [] {}
		-- ++(120:\largo) node (23) [] {}
		-- (21);
		\draw (22)
		-- ++(90:0.9*\largo) node (24)[]{};
		\end{tikzpicture}
		\caption*{\tiny$G_2$}
	\end{minipage}
	\hspace{0.5cm}
	\begin{minipage}[b]{\porcion \textwidth}
		\centering
		\begin{tikzpicture}
		\tikzstyle{every node}=[draw,circle,fill=black,minimum size=2.5pt,
		inner sep=0pt]
		\draw (0,0) node (1) [] {}
		-- ++(0:\largo) node (2) [] {}
		-- ++(120:\largo) node (3) [] {}
		-- (1);
		\draw (1.2*\largo,0) node (4)[]{}
		-- ++(90:\largo) node (5)[]{};
		\end{tikzpicture}
		\caption*{\tiny$G_3$}
	\end{minipage}
	\hspace{0.5cm}
	\begin{minipage}[b]{\porcion \textwidth}
		\centering
		\begin{tikzpicture}
		\tikzstyle{every node}=[draw,circle,fill=black,minimum size=2.5pt,
		inner sep=0pt]
		\draw (0,0) node (1) [] {}
		-- ++(0:\largo) node (2) [] {}
		++(120:\largo) node (3) [] {}
		-- (1);
		\draw (2) edge[bend left = 20] (3);
		\draw (2) edge[bend right = 20] (3);
		\end{tikzpicture}
		\caption*{\tiny$G_4$}
	\end{minipage}
	\hspace{0.5cm}
	\begin{minipage}[b]{\porcion \textwidth}
		\centering
		\begin{tikzpicture}
		\tikzstyle{every node}=[draw,circle,fill=black,minimum size=2.5pt,
		inner sep=0pt]
		\draw (0,0) node (41) []{}
		--++(90:0.33*\largo) node (42) []{}
		++(90:0.33*\largo) node (43) []{}
		-- ++(90:0.33*\largo) node (44)[]{};
		\draw (42) edge[bend left=40] (43);
		\draw (42) edge[bend right=40] (43);
		\end{tikzpicture}
		\caption*{\tiny$G_5$}
	\end{minipage}
	\hspace{-0.5cm}
	\begin{minipage}[b]{\porcion \textwidth}
		\centering
		\begin{tikzpicture}
		\tikzstyle{every node}=[draw,circle,fill=black,minimum size=2.5pt,
		inner sep=0pt]
		\draw (0,0) node (31) [] {}
		++(90:0.33*\largo) node (32) [] {}
		-- ++(90:0.33*\largo) node (33) [] {}
		-- ++(90:0.33*\largo) node (34)[]{};
		\draw (31) edge[bend left=40] (32);
		\draw (31) edge[bend right=40] (32);
		\end{tikzpicture}
		\caption*{\tiny$G_6$}
	\end{minipage}
	\hspace{-0.3cm}
	
	\medskip
	\begin{minipage}[b]{\porcion \textwidth}
		\centering
		\begin{tikzpicture}
		\tikzstyle{every node}=[draw,circle,fill=black,minimum size=2.5pt,
		inner sep=0pt]
		\draw (0,0) node (1) []{}
		++(90: 0.5*\largo) node (2)[]{}
		--++(45: 0.5*\largo) node (3)[]{};
		\draw[] (2) --++(135: 0.5*\largo) node (4)[]{};
		\draw (1) edge[bend left=30] (2);
		\draw (1) edge[bend right=30] (2);	
		\end{tikzpicture}
		\caption*{\tiny$G_{7}$}
	\end{minipage}
	\begin{minipage}[b]{\porcion \textwidth}
		\centering
		\begin{tikzpicture}
		\tikzstyle{every node}=[draw,circle,fill=black,minimum size=2.5pt,
		inner sep=0pt]
		\draw (0,0) node (1) []{}
		++(90: 0.5*\largo) node (2) []{}
		++(90: 0.5*\largo) node (3) []{};
		\draw (1) edge[bend left=30] (2);
		\draw (1) edge[bend right=30] (2);
		\draw (2) edge[bend left=30] (3);
		\draw (2) edge[bend right=30] (3);
		\end{tikzpicture}
		\caption*{\tiny$G_{8}$}
	\end{minipage}
	\begin{minipage}[b]{\porcion \textwidth}
		\centering
		\begin{tikzpicture}
		\tikzstyle{every node}=[draw,circle,fill=black,minimum size=2.5pt,
		inner sep=0pt]
		\draw (0,0) node (1) []{}
		++(90: 0.5*\largo) node (2) []{}
		--++(90: 0.5*\largo) node (3) []{};
		\draw (0.3*\largo,0) node (4) []{}
		--++(90: \largo) node (5) []{};
		\draw (1) edge[bend left=30] (2);
		\draw (1) edge[bend right=30] (2);	
		\end{tikzpicture}
		\caption*{\tiny$G_{9}$}
	\end{minipage}
	\begin{minipage}[b]{\porcion \textwidth}
		\centering
		\begin{tikzpicture}
		\tikzstyle{every node}=[draw,circle,fill=black,minimum size=2.5pt,
		inner sep=0pt]
		\draw (0,0) node (1) []{}
		++(90:\largo) node (2) []{};
		\draw (0.3*\largo,0) node (3) []{}
		-- ++(90: 0.5*\largo) node (4) []{}
		-- ++(90: 0.5*\largo) node (5) []{};
		\draw (1) edge[bend left=20] (2);
		\draw (1) edge[bend right=20] (2);	
		\end{tikzpicture}
		\caption*{\tiny$G_{10}$}
	\end{minipage}
	\begin{minipage}[b]{\porcion \textwidth}
		\centering
		\begin{tikzpicture}
		\tikzstyle{every node}=[draw,circle,fill=black,minimum size=2.5pt,
		inner sep=0pt]
		\draw (0,0) node (1) []{}
		++(90:\largo) node (2) []{};
		\draw (0.3*\largo,0) node (3) []{}
		++(90:\largo) node (4) []{};
		\draw (1) edge[bend left=20] (2);
		\draw (1) edge[bend right=20] (2);
		\draw (3) edge[bend left=20] (4);
		\draw (3) edge[bend right=20] (4);	
		\end{tikzpicture}
		\caption*{\tiny$G_{11}$}
	\end{minipage}
	\begin{minipage}[b]{\porcion \textwidth}
		\centering
		\begin{tikzpicture}
		\tikzstyle{every node}=[draw,circle,fill=black,minimum size=2.5pt,
		inner sep=0pt]
		\draw (0,0) node (1) []{}
		++(90:\largo) node (2) []{};
		\draw (0.3*\largo,0) node (3) []{}
		-- ++(90:\largo) node (4) []{};
		\draw (0.6*\largo,0) node (3) []{}
		-- ++(90:\largo) node (4) []{};
		\draw (1) edge[bend left=20] (2);
		\draw (1) edge[bend right=20] (2);	
		\end{tikzpicture}
		\caption*{\tiny$G_{12}$}
	\end{minipage}
\caption{Possible non-acyclic multi-graphs for $n$ odd.}
\label{fig:Nodd}
\end{figure}

We will now estimate the contribution of each of these possible non-acyclic multi-graphs.

\begin{claim}\label{claim:Bounds1Isomorphism}
	Let $n\in \mathbb{N}$, then
	$$\#\{Y\in\mathcal{Y}: G(Y) \cong G_c\}=
	\begin{cases}
		O(n^4), & \mbox{ if } c = 1,2,3,5,6,7,12;\\
		O(n^3), & \mbox{ if } c = 4,8,9,10,11.
	\end{cases}$$
\end{claim}
 \begin{proof}[\textbf{Proof}]
 	The cases $c = 1,2,5,6,7$ can be bounded by the trivial bound $O(n^4)$, and the same for the cases $c=4,8$ with the bound $O(n^3)$. The remaining cases $c=3,9,10,11,12$ require better estimates than their respective trivial bounds.\\
 	
 	First, notice that for all cases, the four edges of the multi-graph $G(\{y_{i,j},y_{i,j'},y_{r,s},y_{r,s'}\})$ are divided into two pairs: $e(y_{i,j}),e(y_{i,j'})$ of length $i$ and $e(y_{r,s}),e(y_{r,s'})$ of length $r$. The case $G_3$ is bounded by ${n \choose 3} * 2n =  O(n^4)$ because three vertices can be chosen freely to form a triangle whose edges have at most two different lengths $i,r$, then we choose a vertex $v$ for the free edge and finally we choose $v'$ such that $\dist{v-v'} = i$ or $\dist{v-v'} = r$ depending on the lengths of the edges in the triangle, therefore $v'$ has only two choices.\\
 	
 	The case $G_{12}$ is also bounded by $O(n^4)$. To show this, we distinguish between two subcases. In the first subcase, the multi-edge is formed of the associated edges of the same pair, w.l.o.g.\ $e(y_{i,j}) = e(y_{i,j'})$ (this can only happen in the case $n$ even). Then the free edges are formed of the edges $e(y_{r,s}), e(y_{r,s'})$, which have length $r$; we choose two vertices for the multi-edge and two more vertices $v_1,v_2$ (one for each of the free edges), but then the two missing vertices $v_{1}',v_{2}'$ have at most two options each, because $\dist{v-v_1} = \dist{v_2-v_{2}'} = r$. Thus this subcase is bounded by $O(n^4)$. The second subcase is when $e(y_{i,j}) \neq e(y_{i,j'})$ and $e(y_{r,s}) \neq e(y_{r,s'})$. Then w.l.o.g.\ the multi-edge is formed of the $e(y_{i,j}) = e(y_{r,s})$ then $i=r$, thus all edges have the same length; we choose two vertices $v,v'$ for the multi-edge and two more vertices $v_1,v_2$ (one for each of the free edges). The missing vertices $v_{1}',v_{2}'$ have at most two choices each because $\dist{v_1-v_{1}'} = \dist{v_2-v_{2}'} = \dist{v-v'}$, which gives again a $O(n^4)$ bound.\\
 	
 	For $G_{9}$, if we are in the case $n$ odd, then the multi-edge is formed of edges of different groups, w.l.o.g.\ $e(y_{i,j}) = e(y_{r,s})$ and $i=r$. Therefore the edge attached to the multi-edge is uniquely defined because its length is determined, and the isolated edge is almost uniquely defined once one of the end points is chosen, because the other end has at most two choices. Overall, this gives the $O(n^3)$ bound. In the case $n$ even, it can happen that w.l.o.g.\ $e(y_{i,j}) = e(y_{i,j'})$ but this can only happen when $i = n/2$. Then the multi-edge is uniquely defined by choosing one end, the isolated edge is defined by choosing two end points, and the last edge has at most four options since its length is already determined by the length of the isolated edge. This gives again a $O(n^3)$ bound.\\
 	
 	For $G_{10}$, in the case $n$ odd we can assume as before $e(y_{i,j}) = e(y_{r,s})$. Then $i=r$, and the multi-edge is determined by choosing two vertices and the remaining two edges are uniquely defined by the central vertex. This yields the bound $O(n^3)$. In the other case, w.l.o.g.\ $e(y_{i,j}) = e(y_{i,j'})$, and $i=n/2$. The multi-edge can be defined by choosing only one vertex, and the isolated path can be defined by choosing two vertices for one edge,  while the remaining edge will have at most two options. This yields again a $O(n^3)$ bound.\\
 	
 	For $G_{11}$, if $e(y_{i,j}) = e(y_{r,s})$, then all edges have the same length $i=r$, we can choose two vertices for the first multi-edge and one vertices for the second multi-edge, while the remaining vertex has at most two options. This yields a $O(n^3)$ bound. In the case when $e(y_{i,j}) = e(y_{i,j'})$ then $e(y_{r,s}) = e(y_{r,s'})$ and  $i=r=n/2$. In this case we can choose two vertices (one for each multi-edge), and the remaining two vertices are automatically determined. This yields a $O(n^2) = O(n^3)$ bound. Thus we have established Claim \ref{claim:Bounds1Isomorphism}.
 \end{proof}

 \noindent We continue with the proof of Claim \ref{claim:SecondMoments} in the case when $n$ is odd. We observe that
\begin{equation*}\label{eq:EyyyyOdd}
\E{y_{i,j}y_{i,j'}y_{r,s}y_{r,s'}}
	=
	\prob{y_{i,j}y_{i,j'}y_{r,s}y_{r,s'} = 1}
	= \prob{\left\{\mathbf{b}:~ \Tb(i,j)=\Tb(i,j')=\Tb(r,s)=\Tb(r,s')=0 \right\} },
\end{equation*}
and thus for $Y = \{y_{i,j},y_{i,j'},y_{r,s},y_{r,s'}\} \in  \mathcal{Y}$, we have that
\begin{equation}\label{eq:EY}
\E{y_{i,j}y_{i,j'}y_{r,s}y_{r,s'}}
=
\begin{cases}
\frac{1}{n^3}, &\mbox{if } G(Y )\cong G_{1,2,3,5,6,7,9,10,12};\\
\frac{1}{n^2}, &\mbox{if } G(Y) \cong G_{4,8,11}.
\end{cases}
\end{equation}
The last equation, combined with Claim~\ref{claim:Bounds1Isomorphism}, implies that
\begin{align*}
	\sum_{\sumOne}
	\cov{y_{i,j}y_{i,j'},y_{r,s}y_{r,s'}}
	&\leq
	\sum_{\sumOne}
	\E{y_{i,j}y_{i,j'}y_{r,s}y_{r,s'}}
	\\&\leq
	7 \cdot O(n^4)\frac{1}{n^3}
	+
	3 \cdot O(n^3)\frac{1}{n^2}
	+
	2 \cdot O(n^3)\frac{1}{n^3}
	\\& =
	O(n).
\end{align*}
Using the previous inequality and \eqref{eq:UpperBoundFirstSumVZ1Odd} we get that
\begin{equation}\label{eq:VZ1OddO(n)}
	\var{\Z_1}
	=
	\sum_{\substack{1 \leq i\leq \nmed \\0 \leq j < j' \leq n-1}}
	\var{y_{i,j}y_{i,j'}}
	+
	\sum_{\sumOne}
	\cov{y_{i,j}y_{i,j'},y_{r,s}y_{r,s'}}
	=
	O(n)+ O(n)
	= O(n).
\end{equation}
This completes the proof of Claim \ref{claim:SecondMoments} in the case when $n$ is odd.\\

\indent Case 2: $n$ even.
We estimate the variances of $\Z_0$ and $\Z_1$. For $n$ even, we can write $\Z_0$ as
$$
	\Z_0
	=
	\sum_{\substack{1\leq i < \med\\0 < j \leq n-1}} y_{i,j}
	+
	2\sum_{j=0}^{\med -1}
	 y_{\med,j},
$$
where all variables involved in the sums are mutually independent (see Remark~\ref{rmk:EqualEdges}). Thus
$$
	\var{\Z_0}
	=
	\sum_{\substack{1\leq i < \med\\0 \leq j \leq n-1}} \var{y_{i,j}}
	+
	4\sum_{j=0}^{\med - 1}
	\var{y_{\med,j}}.
$$
Using \eqref{eq:Ey}, we deduce that
\begin{equation}\label{eq:VZ0EvenO(n)}
	\var{\Z_0}
=
\left(\med -1 \right)
n
\left(\frac{1}{n} - \frac{1}{n^2}\right)
+
4\med \left( \frac{1}{n} - \frac{1}{n^2} \right)
=
O(n),
\end{equation}
for all $n$ even.
 By Remark~\ref{rmk:EqualEdges}, we can write $\Z_1$ as
$$
\Z_1
=
\sum_{\substack{1 \leq i \leq \med \\0 \leq j < j' \leq n-1\\j\not\equiv j' \Mod{n/2}}}
y_{i,j}y_{i,j'}	
+
\sum_{s = 0}^{\frac{n}{2} - 1}
y_{n/2,s}.
$$
Therefore
\begin{equation}\label{eq:VZ1ExpansionEven}
\begin{split}
\var{\Z_1}
&=
\sum_{\substack{1 \leq i \leq \med \\0 \leq j < j' \leq n-1\\j\not\equiv j' \Mod{n/2}}}
\var{y_{i,j}y_{i,j'}}
+
\sum_{s = 1}^{\frac{n}{2} - 1}
\var{y_{n/2,s}}
+
\sum_{\sumOneEven}
\cov{y_{i,j}y_{i,j'}, y_{r,s}y_{r,s'}}\\
\\&+
2\sum_{\substack{1\leq u \leq \med \\ 0 \leq v < v' \leq n-1 \\ v \not\equiv_\med v' \\
0\leq w \leq \med -1}}
\cov{y_{u,v}y_{u,v'}, y_{\med,w}}
+
\underbrace{\sum_{\substack{ 0\leq w,w' \leq \med -1\\ w\neq w'}}
\cov{y_{\med,
	w}, y_{\med,w'}}}_{= \, 0 \text{ (\tiny by Remark~\ref{rmk:EqualEdges})}}.
\end{split}
\end{equation}
We divide the analysis into three parts: the first two sums, the third sum, and the fourth sum. Using Remark~\ref{rmk:EqualEdges}, we write the first two sums in \eqref{eq:VZ1ExpansionEven} as
\begin{equation}\label{eq:VZ1EvenFirstTwoSums}
\sum_{\substack{1 \leq i \leq \med \\0 \leq j < j' \leq n-1\\j\not\equiv j' \Mod{n/2}}}
\var{y_{i,j}}\var{y_{i,j'}}
+
\sum_{s = 1}^{\frac{n}{2} - 1}
\var{y_{n/2,s}}
\stackrel{\mbox{\tiny \eqref{eq:Ey} }}{\leq}
n \cdot n^2 \left(\frac{1}{n}-\frac{1}{n^2}\right)^2
+
n \left(\frac{1}{n}-\frac{1}{n^2}\right)
=
O(n).
\end{equation}
The third sum in \eqref{eq:VZ1ExpansionEven} can be bounded above in the same way as in the odd case: the associated graphs of variables $y_{i,j},y_{i,j'},y_{r,s},y_{r,s'}$ with non-zero covariance in the third sum, are isomorphic to one of the graphs in Figure~\ref{fig:Nodd}. Thus we can use Claim~\ref{claim:Bounds1Isomorphism} and \eqref{eq:EY} to obtain
\begin{equation}\label{eq:CovSum2}
\sum_{\sumOneEven}
\cov{y_{i,j}y_{i,j'},y_{r,s}y_{r,s'}}
=
O(n).
\end{equation}
In the fourth sum in \eqref{eq:VZ1ExpansionEven}, the variables with non-zero covariance have an associated multi-graph which is isomorphic to one of the following multi-graphs.
\begin{figure}[h!]
\centering
\begin{minipage}{0.15 \textwidth}
\centering
	\begin{tikzpicture}
	\tikzstyle{every node}=[draw,circle,fill=black,minimum size=2.5pt,
	inner sep=0pt]
	\draw (0,0) node (11) [] {}
	-- ++(0:1.0cm) node (12) [] {}
	-- ++(120:1.0cm) node (13) [] {}
	-- (11);
	\end{tikzpicture}
	\caption*{\tiny $G_{13}$}
\end{minipage}
\begin{minipage}{0.15 \textwidth}
	\centering
	\begin{tikzpicture}
	\tikzstyle{every node}=[draw,circle,fill=black,minimum size=2.5pt,
	inner sep=0pt]
	\draw (3,0) node (21) []{}
	++(90:1.0cm) node (22) []{};
	\draw (21) edge[bend left=20] (22);
	\draw (21) edge[bend right=20] (22);
	\draw (3.5,0) node (23) []{}
	-- ++(90:1.0cm) node (24) []{};
	\end{tikzpicture}
	\caption*{\tiny$G_{14}$}
\end{minipage}
\begin{minipage}{0.15 \textwidth}
	\centering
	\begin{tikzpicture}
	\tikzstyle{every node}=[draw,circle,fill=black,minimum size=2.5pt,
	inner sep=0pt]
	\draw (0,0) node (31) []{}
	++(90:0.5) node (32) []{}
	-- ++(90:0.5) node (33) []{};
	\draw (31) edge[bend left=30] (32);
	\draw (31) edge[bend right=30] (32);
	\end{tikzpicture}
	\caption*{\tiny $G_{15}$}
\end{minipage}

\end{figure}

 \noindent Let $\mathcal{X} := \left\{\{y_{u,v},y_{u,v'},y_{\med,w}\}: 1\leq u \leq \med; 0 \leq v < v' \leq n-1; v\not\equiv_{\med}v';0\leq w \leq \med -1\right\}$. In the same way as  Claim~\ref{claim:Bounds1Isomorphism}, we can prove that $$\#\left\{X\in\mathcal{X}: G(X)\cong G_{c}\right\} = O(n^3), \quad c=13,14,15.$$ As in \eqref{eq:EY}, we can prove that $\E{y_{u,v}y_{u,v'}y_{\med,w}} = \frac{1}{n^2}$ for all $X = \{y_{u,v},y_{u,v'},y_{\med,w}\}\in \mathcal{X}$. Thus
\begin{equation}\label{eq:CovSum3}
\sum_{\substack{1\leq u \leq \med \\ 0 \leq v < v' \leq n-1 \\ 0\leq w \leq \med -1}}
\cov{y_{u,v}y_{u,v'},y_{\med,w}}
\leq
3\cdot O(n^3)\frac{1}{n^2}
=
O(n).
\end{equation}
Plugging \eqref{eq:VZ1EvenFirstTwoSums},\eqref{eq:CovSum2},\eqref{eq:CovSum3} into \eqref{eq:VZ1ExpansionEven}  finally yields
\begin{equation}\label{eq:VZ1EvenO(n)}
	\var{\Z_1}
	=
	O(n) + O(n) + 2\cdot O(n)
	=
	O(n),
\end{equation}
for all $n$ even. Equations \eqref{eq:VZ0OddO(n)},\eqref{eq:VZ1OddO(n)},\eqref{eq:VZ0EvenO(n)} and \eqref{eq:VZ1EvenO(n)} together yield  Claim~\ref{claim:SecondMoments} in the case when $n$ is even. Thus we have fully established Claim~\ref{claim:SecondMoments}.
\end{proof}

\subsection{$\mathcal{E}_{\mathrm{\small zero}} (\frac{1}{2}-\varepsilon)$ has high probability}
 Using Chebyshev's inequality, we obtain that
 \begin{align*}
	 &\prob{|\Z_0-\E{\Z_0}|\geq \lambda_0}
	 \leq
	 \frac{\var{\Z_0}}{\lambda_{0}^2};
	 &
	 \prob{|\Z_1-\E{\Z_1}|\geq \lambda_1}
	 \leq
	 \frac{\var{\Z_1}}{\lambda_{1}^2},
 \end{align*}
  for every $\lambda_0,\lambda_1 >0$.
 In particular, this implies that
 \begin{align*}
 &\prob{\Z_0 < \E{\Z_0} - \lambda_0}
 \leq
 \frac{\var{\Z_0}}{\lambda_{0}^2};
 &
 \prob{\Z_1>\E{\Z_1} + \lambda_1}
 \leq
 \frac{\var{\Z_1}}{\lambda_{1}^2}.
 \end{align*}
Let $\varepsilon \in (0,1)$ be the constant from the statement of Lemma~\ref{lemma:ConcentrationOfD}, and set $\nu=\varepsilon n/8$. Choosing $\lambda_0 = \lambda_1 = \nu$ and using Claims~\ref{claim:FirstMoments} and \ref{claim:SecondMoments} we get that
 \begin{align*}
&\prob{\Z_0 < \E{\Z_0} - \nu}
\leq
\frac{\var{\Z_0}}{\nu^2}
=\frac{O(n)}{n^2}
=
O\left(\frac{1}{n}\right)
;
\\&
\prob{\Z_1>\E{\Z_1} +\nu}
\leq
\frac{\var{\Z_1}}{\nu^2}
=
\frac{O(n)}{n^2}
=
O\left(\frac{1}{n}\right).
\end{align*}
By Claim~\ref{claim:FirstMoments} we have 
$$
\delta := (\frac{1}{2}-\varepsilon) \nmed \leq \E{\Z_0-\Z_1} - 2 \nu
$$
for $n$ sufficiently large.
Thus, using Claim \ref{claim:ConcentrationReduction} we can conclude that
\begin{align*}
\prob{\mathcal{E}_{\mathrm{\small zero}}^c (\frac{1}{2}-\varepsilon) } & =
	\prob{\left\{  \mathbf{b} \in \mathcal{M}_n:~\D(\mathbf{b}) < (\frac{1}{2}-\varepsilon) \nmed \right\}} \\
	& \leq
	\underbrace{\mathbb{P}\Big[\Z_0 < \E{\Z_0} - \nu \Big]}_{\mbox{\tiny$= O\left(\frac{1}{n}\right)$}}
	+
	\underbrace{\mathbb{P}\Big[\Z_1>\E{\Z_1} + \nu \Big]}_{\mbox{\tiny$= O\left(\frac{1}{n}\right)$}}
	\\&=
	O\left(\frac{1}{n}\right).
\end{align*}
This concludes the proof of Lemma~\ref{lemma:ConcentrationOfD}.
\section{Connections with chromatic polynomials of circulant graphs}\label{sec:ChromaticConnections}
As we have already seen in the proof of Claim~\ref{claim:SubsetIndependent}, the multi-graph associated with the variables in row $i \neq \frac{n}{2}$ of $\Tb$ is the circulant graph $C_n(i)$, and the same holds for the variables in row $n/2$ if we consider the associated graph and not the associated multi-graph. Furthermore, we can express the probability of synchronization of circular automata in terms of chromatic polynomials of circulant graphs: this is a consequence of the close connection of the moments of  $\D(\b{})$ to chromatic polynomials of circulant graphs. We formalize this in the following results.\\
\begin{defi}
	The \textit{circulant graph} $C_n(i_1,i_2,\ldots,i_k)$ is a graph with vertex set $\zn$ where two vertices $r,s$ are adjacent if $\dist{r-s} \in \{ i_1,i_2,\ldots,i_k\}.$
\end{defi}
\begin{defi} Let $G$ be a graph with vertex set $\{0,1,\ldots,n-1\}$. The \textit{chromatic polynomial} $P(G;x): \mathbb{N} \rightarrow \mathbb{N}$ of $G$ is defined by
	$$
	P(G;x) := \#\{ \mathbf{b} \in \{0,\ldots,x-1\}^n: ~\mathbf{b} \text{ is a proper coloring of } G\}.
	$$
\end{defi}
\begin{remark}
	Let $G$ be of order $n$. Then
	$P(G;x) = \sum_{j=1}^{n} \lambda_j x^j,$ where $\lambda_j \in \mathbb{Z}$ (see, for instance, \cite{DongBook}).
\end{remark}
\begin{claim}\label{claim:VD} Let $\D(\mathbf{b})$ and $\b{}=(b_0,b_1,\ldots,b_{n-1}) \in \mathcal{M}_n$ be as in Lemma~\ref{lemma:ConcentrationOfD}. Then
	\begin{align*}
	&\E{\D}= \nmed - \sum_{i=1}^{\nmed} \frac{P_i(n)}{n^n}
	\end{align*}
	and
	\begin{align*}
	& \var{\D} =
	\sum_{i=1}^{n} \left( \frac{P_i(n)}{n^n} - \frac{P_{i}^2(n)}{n^{2n}} \right)
	+
	2\sum_{1\leq i<j\leq\nmed} \left(\frac{P_{i,j}(n)}{n^n} - \frac{P_i(n)P_j(n)}{n^{2n}}\right),
	\end{align*}
	where $P_i$ is the chromatic polynomial of the circulant graph $C_n(i)$ and $P_{i,j}$ is the chromatic polynomial of the circulant graph $C_n(i,j)$.
\end{claim}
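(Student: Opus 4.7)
The proof plan rests on a single clean observation that bridges the indicator variables $D_i(\mathbf{b})$ and the chromatic polynomials. Namely, for each $i \in \{1,\ldots,\lfloor n/2\rfloor\}$, the event $\{D_i(\mathbf{b}) = 0\}$ is exactly the event that no two coordinates $b_k, b_l$ with $|k-l|_n = i$ agree, i.e.\ the event that the map $\mathbf{b} \in \mathcal{M}_n = \mathbb{Z}_n^n$, read as an assignment of a value in $\{0,\ldots,n-1\}$ to each vertex, is a proper vertex coloring of the circulant graph $C_n(i)$. Since $\mathbb{P}$ is uniform on $\mathcal{M}_n$, this immediately gives
$$
\mathbb{P}\bigl[D_i(\mathbf{b}) = 0\bigr] = \frac{P_i(n)}{n^n}, \qquad \mathbb{E}[D_i] = 1 - \frac{P_i(n)}{n^n}.
$$

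The formula for the expectation then follows at once by linearity:
$$
\mathbb{E}[D] = \sum_{i=1}^{\lfloor n/2\rfloor} \mathbb{E}[D_i] = \lfloor n/2\rfloor - \sum_{i=1}^{\lfloor n/2\rfloor} \frac{P_i(n)}{n^n}.
$$

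For the variance, I would expand $\mathbb{V}[D]$ into marginal variances plus twice the off-diagonal covariances. Because each $D_i$ takes values in $\{0,1\}$, one has $\mathbb{V}[D_i] = \mathbb{E}[D_i] - \mathbb{E}[D_i]^2 = (1 - P_i(n)/n^n)(P_i(n)/n^n) = P_i(n)/n^n - P_i^2(n)/n^{2n}$, giving the diagonal term. For the covariance I would apply the same coloring observation to two indices simultaneously: $\{D_i = 0\} \cap \{D_j = 0\}$ is precisely the event that $\mathbf{b}$ is a proper coloring of $C_n(i,j)$, so $\mathbb{P}[D_i = 0, D_j = 0] = P_{i,j}(n)/n^n$. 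A short computation using the identity $\mathbb{P}[D_i = 0, D_j = 0] = 1 - \mathbb{E}[D_i] - \mathbb{E}[D_j] + \mathbb{E}[D_i D_j]$ then yields
$$
\mathrm{Cov}(D_i, D_j) = \mathbb{E}[D_i D_j] - \mathbb{E}[D_i]\mathbb{E}[D_j] = \frac{P_{i,j}(n)}{n^n} - \frac{P_i(n) P_j(n)}{n^{2n}},
$$
and summing over $1 \le i < j \le \lfloor n/2\rfloor$ produces the claimed formula.

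There is no substantive obstacle here: the entire argument is a direct translation between combinatorial events on $\mathcal{M}_n$ and proper colorings of $C_n(i)$ and $C_n(i,j)$, plus standard manipulations of variance for indicator sums. The only minor point worth stating carefully is the identification of $\mathcal{M}_n$ with the set of $n$-colorings of the vertex set $\mathbb{Z}_n$, which is precisely what makes the uniform measure $\mathbb{P}$ transport to the uniform distribution on $\{0,\ldots,n-1\}^n$ used in the definition of $P(G;x)$.
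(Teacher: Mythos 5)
Your proof is correct and takes essentially the same approach as the paper: both identify the event that $T_\mathbf{b}$'s row $i$ (resp.\ rows $i,j$) has no zeros with the event that $\mathbf{b}$ is a proper coloring of $C_n(i)$ (resp.\ $C_n(i,j)$), and then expand the variance of the indicator sum via marginal variances and pairwise covariances. The paper introduces the auxiliary variables $x_i = 1 - D_i$ to carry out the covariance algebra, but this is a purely notational difference from your direct manipulation of $\mathbb{P}[D_i = 0, D_j = 0]$.
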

\begin{remark}\label{rmk:PiExplicitPijNPHard}
	$\bullet$ It is easy to derive that
	$ P_i(x) = \left((x-1)^{l_i} + (-1)^{l_i}(x-1)\right)^\frac{n}{l_i} $
	where $l_i=\frac{n}{\gcd (n,i)}$, because $C_n(i)$ is a collection of $\gcd(n,i)$ many disjoint cycles of length $\frac{n}{\gcd (n,i)}$  \cite{BoeschConnectivitiesCirculants}. With this explicit expression, an easy corollary of Claim \ref{claim:VD} is the estimate $\E{\D} \sim (1-e^{-1}) \nmed$.
	\\$\bullet$ We could not find an explicit expression for $P_{i,j}$. The calculation of the chromatic number of  circulant graphs with an arbitrary number of parameters is an NP-Hard problem \cite{CODENOTTICirculantGraphsChroPolyNPHard}. This implies that the calculation of chromatic polynomials of circulant graphs is also NP-Hard since \newline $\chi(G) = \text{argmin}_{w\in\mathbb{N}} P(G;w) > 0$ -- we believe that our unfruitful attempts to estimate $\var{\D}$ are connected to this. To circumvent these issues, the variables $\Z_0$ and $\Z_1$ in Section~\ref{sec:ProofLemmaZeros} were introduced.
\end{remark}
\begin{proof}[\textbf{Proof of Claim~\ref{claim:VD}}]
	Let us recall that $\D(\mathbf{b}) = \sum_{i=1}^{\nmed} \D_i(\mathbf{b})$, where 		$$
	\D_i(\mathbf{b})
	:=
	\begin{cases}
	1, &\mbox{ if there exist } \, k,l \in \zn
	\mbox{ such that } \dist{k-l} = i \mbox{ and } \dist{b_k-b_l} = 0.
	\\
	0, &\mbox{ otherwise, }
	\end{cases}
	$$
	Then $\D_i(\mathbf{b}) = 1 - x_{i}(\mathbf{b})$, where
	$$
	x_i(\b{}) := \prod_{j=0}^{n} \left(1 - \mathds{1}\{\dist{b_j - b_{(j+i)_n}}=0\}\right).
	$$
	We observe that	$x_i(\mathbf{b}) = 1$ if and only if every two numbers $r,s \in \zn$ at cyclic distance $i$ have different images under $\b{}$ and $x_i(\mathbf{b}) = 0$ otherwise. If we consider $\b{}$ as a random coloring of $C_n(i)$, then $x_i(\b{}) = 1$ if and only if $C_n(i)$ is properly colored by $\b{}$. Thus
	$$\E{x_i} = \prob{\{\mathbf{b}:~x_i(\b{}) = 1 \}} = \frac{P_i(n)}{n^n}.$$
	In a similar way
	$$
	\E{x_i x_j} = \prob{\{\mathbf{b}:~x_i(\b{}) x_j(\b{}) = 1\}}
	=
	\frac{P_{i,j}(n)}{n^n}.
	$$
	Therefore
	$$
	\E{\D} = \sum_{i=1}^{\nmed} \E{\D_i}
	=
	\sum_{i=1}^{\nmed} \left(1 - \E{x_i}\right)
	=
	\nmed - \sum_{i=1}^{\nmed} \frac{P_i(n)}{n^n},
	$$
	as well as
	$$
	\var{\D_i}
	=
	\E{\D_{i}^2}-\E{\D_i}^2
	=
	\left(1-\frac{P_{i}(n)}{n^{n}}\right)
	-
	\left(1-\frac{P_{i}(n)}{n^{n}}\right)^2
	=
	\frac{P_i(n)}{n^n}
	- \frac{P_{i}^2(n)}{n^{2n}}\\
	$$
	and
	\begin{align*}
	\cov{\D_i,\D_j}
	=
	\E{\D_i\D_j}-\E{\D_i}\E{\D_j}
	&=
	\E{(1-x_i)(1-x_j)}- \E{1-x_i}\E{1-x_j}
	\\&=
	\E{x_ix_j}-\E{x_i}\E{x_j}
	\\&=
	\frac{P_{i,j}(n)}{n^n} - \frac{P_i(n)P_j(n)}{n^{2n}}.
	\end{align*}
	Plugging the two previous equations into
	\begin{align*}
	\var{\D}
	=
	\sum_{i=1}^{\nmed} \var{\D_i}
	+
	2\sum_{1\leq i<j\leq\nmed}
	\cov{\D_i,\D_j}
	\end{align*}
	yields Claim~\ref{claim:VD}.
\end{proof}
\noindent We get the following relation between chromatic polynomials of circulant graphs and  synchronization of circular automata.
The number $\frac{1}{2} - e^{-1}$ in the statement of Theorem~\ref{thm:SyncChromPoly} has the approximate value $0.13$.

\begin{thm}\label{thm:SyncChromPoly}
	Let $\mathcal{A}_n (\mathbf{b})$ be a circulant graph as introduced in Section \ref{sec:MainResult}. 
	Let $\varepsilon \in (0,\frac{1}{2} - e^{-1})$, then there exist $n_{\epsilon} \in \mathbb{N}$ such that for all $n  \geq  n_{\epsilon}$ it holds that
	$$
	\prob{\{\mathbf{b} \in \mathcal{M}_n:~\mathcal{A}_n (\mathbf{b})\mbox{ synchronizes} \}}
	\geq
	1 - \nmed \exp\left\{ -\frac{1}{2n}\left(\varepsilon \nmed - 1\right)^2\right\}
	-
	\frac{\var{\D}}{\left(\varepsilon\nmed-1\right)^2},
	$$
	where $\var{\D}$ is as given in Claim~\ref{claim:VD}.
\end{thm}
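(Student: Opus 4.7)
The plan is to reuse the deterministic reduction established in the proof of Theorem~\ref{thm:main}: whenever $\b{} \in \RU \cap \RD$ for positive $\alpha,\beta$ with $\alpha+\beta > 1$, the automaton $\auto(\b{})$ synchronizes, so that
$$
\prob{\auto(\b{}) \text{ synchronizes}} \geq 1 - \prob{\cRU} - \prob{\cRD}.
$$
What changes relative to Theorem~\ref{thm:main} is that I keep all constants explicit (rather than absorbing them into $O$-notation) and, crucially, I estimate $\prob{\cRD}$ by applying Chebyshev's inequality directly to $\D$ itself, so that $\var{\D}$ from Claim~\ref{claim:VD} enters the final bound in place of the surrogate quantities $\var{\Z_0}, \var{\Z_1}$ used in Section~\ref{sec:ProofLemmaZeros}.

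For the first error term I would set $\alpha^{\star} := 1 - e^{-1} - \varepsilon$. The union bound over the $\nmed$ rows together with Claim~\ref{claim:ElementsInRow} applied with this same $\varepsilon$ yields, exactly as in the derivation preceding \eqref{eq:ExplicitBoundRows} but without any asymptotic simplification,
$$
\prob{\cRUstar} \leq \nmed\, e^{-2\lambda_{\varepsilon}(n)} = \nmed \exp\!\left\{ -\tfrac{1}{2n}(\varepsilon\nmed - 1)^2 \right\},
$$
valid as soon as $n > 2/\varepsilon$. This is precisely the first error term stated in the theorem.

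For the second error term I would choose $\beta^{\star}$ so that the threshold $\beta^{\star}\nmed$ lies exactly $\varepsilon\nmed - 1$ below $\E{\D}$, namely $\beta^{\star}\nmed := \E{\D} - (\varepsilon\nmed - 1)$. Then the event $\cRDstar = \{\D < \beta^{\star}\nmed\}$ is contained in $\{|\D - \E{\D}| \geq \varepsilon\nmed - 1\}$, and Chebyshev's inequality gives
$$
\prob{\cRDstar} \leq \frac{\var{\D}}{(\varepsilon\nmed - 1)^2}.
$$
Substituting both bounds into \eqref{eq:SynchroMatrixReduction} reproduces the right-hand side of the theorem.

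The only delicate point, and the main obstacle, is to verify the compatibility condition $\alpha^{\star} + \beta^{\star} > 1$ required by the deterministic reduction. By Claim~\ref{claim:VD} together with the explicit expression for $P_i$ recalled in Remark~\ref{rmk:PiExplicitPijNPHard}, one has $P_i(n)/n^n \to e^{-1}$ with sufficient uniformity in $i$, and consequently $\E{\D}/\nmed \to 1 - e^{-1}$ as $n \to \infty$. Hence $\beta^{\star} \to 1 - e^{-1} - \varepsilon$ and $\alpha^{\star} + \beta^{\star} \to 2(1-e^{-1}) - 2\varepsilon$, which strictly exceeds $1$ precisely when $\varepsilon < \tfrac{1}{2} - e^{-1}$. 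This is exactly the standing hypothesis, so it suffices to pick $n_{\varepsilon}$ large enough that this limiting inequality is already attained for every $n \geq n_{\varepsilon}$ (and simultaneously $n > 2/\varepsilon$, so that $\varepsilon\nmed - 1 > 0$); for any such $n$ the reduction applies and the bound follows.
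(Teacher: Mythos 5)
Your proof is correct and follows essentially the same route as the paper's: the same deterministic reduction via $\RU \cap \RD$, the same application of Claim~\ref{claim:ElementsInRow}/McDiarmid for the first error term, and Chebyshev applied to $\D$ for the second. The only stylistic difference is in the second term: the paper first establishes an explicit lower bound $\eta_\star$ for $\E{\D}$ from the formula for $P_i$ in Remark~\ref{rmk:PiExplicitPijNPHard}, fixes $\tilde{\beta} = 1-e^{-1}-\varepsilon-\frac{1}{\nmed}$, and chooses $\lambda$ so that $\eta_\star - \lambda = \tilde{\beta}\nmed$, then shows $\lambda \geq \nmed\varepsilon - 1$; you instead peg $\beta^\star\nmed := \E{\D} - (\varepsilon\nmed - 1)$ directly so Chebyshev produces $(\varepsilon\nmed-1)^2$ immediately, and verify the compatibility $\alpha^\star+\beta^\star>1$ by the asymptotic estimate $\E{\D}/\nmed \to 1-e^{-1}$ (which the paper records in Remark~\ref{rmk:PiExplicitPijNPHard}). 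Both approaches land on the identical bound for all $n \geq n_\varepsilon$; yours is marginally tidier algebraically, while the paper's makes the threshold $\tilde\beta$ explicit rather than implicitly defined through $\E{\D}$.
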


\begin{proof}[\textbf{Proof}]
	By \eqref{eq:Lamdavarepsilon},\eqref{eq:ExplicitBoundRows} we know that
	\begin{equation}\label{eq:E^crowExplicit}
	\prob{\cRUstar}
	\leq
	\nmed \exp\left\{-\frac{1}{2n}\left( \varepsilon \nmed - 1 \right)^2\right\},
	\end{equation}
	for all $\varepsilon>0$ and $n$ large enough, where $\alpha^\star = 1-e^{-1}-\varepsilon$. Using the expression for $P_i$ in Remark~\ref{rmk:PiExplicitPijNPHard} together with the inequality $1-x \leq e^{-x},~x\in\mathbb{R}$, we bound $P_i(n)/n^n$ from above
	\begin{align*}
	\frac{\P{i}}{n^n}
	&=
	\left(\frac{n-1}{n}\right)^n\left(1+\frac{(-1)^{\ell_i}}{(n-1)^{\ell_i-1}}\right)^{\frac{n}{\ell_i}}
	\\
	&\leq
	e^{-1}
	\left(1+\frac{1}{(n-1)^{\ell_i-1}}\right)^{\frac{n}{\ell_i}}
	\\
	&\leq
	e^{-1}
	e^{\frac{n}{\ell_{i}} \cdot	 \frac{1}{(n-1)^{\ell_{i}-1}}}
	\\
	&=
	\exp
	\left\{
	-1
	+ \frac{n}{\ell_{i}\cdot(n-1)^{\ell_{i}-1}}
	\right\}		
	\end{align*}
	and thus
	\begin{equation}\label{eq:UpperBoundChromaticPoly}
	\frac{\P{i}}{n^n}
	\leq
	\begin{cases}
	\exp 
	\left\{
	-1 
	+
	\frac{1}{2} \cdot
	\left(\frac{n}{n-1}\right)
	\right\}
	, \mbox{ if } i = \frac{n}{2} \mbox{ i.e. } \ell_{i} = 2;	
	\\
	\\
	\exp 
	\left\{
	-1
	+ 
	\frac{n}{3(n-1)^2}
	\right\}
	, \mbox{ if } i \neq \frac{n}{2} \mbox{ i.e. } \ell_{i} \geq 3.
	\end{cases}	
	\end{equation}	
	Using Equation~\ref{eq:UpperBoundChromaticPoly} and the equation $\E{\D}= \nmed - \sum_{i=1}^{\nmed} \frac{P_i(n)}{n^n}$ from Claim~\ref{claim:VD} we get that
	$$
	\E{\D}
	\geq
	\nmed
	\left(
	1 - \exp\left\{ \frac{n}{3(n-1)^2} - 1\right\}
	\right)
	-1 = \eta_\star .
	$$
	By Chebyshev's inequality and elementary manipulations, we get that
	$$
	\prob{ \{ \mathbf{b} \in \mathcal{M}_n:~\D(\mathbf{b}) < \eta_\star - \lambda  \}}
	\leq \frac{\var{\D}}{\lambda^2},
	$$
	for all $\lambda>0$. Let $\varepsilon >0$. Setting $\lambda = \lambda_{\varepsilon}'(n)= \eta_\star - \nmed (1- e^{-1} -\varepsilon) + 1$  and noting that $\lambda >0$ for $n$ large enough, we get that
	\begin{equation}\label{eq:E^czeroRecall}
	\prob{\left(\mathcal{E}_{\text{\tiny zero}}^{\tilde{\beta}}\right)^c}
	=
	\prob{ \left\{ \mathbf{b} \in \mathcal{M}_n:~\D(\mathbf{b}) < \nmed \left(1-e^{-1}-\varepsilon \right) - 1 \right\}}
	\leq \frac{\var{\D}}{\left(\lambda_{\varepsilon}'(n)\right)^2}
	\leq
	\frac{\var{\D}}{\left(\nmed \varepsilon -1\right)^2}
	\end{equation}
	for $n$ sufficiently large, where $\tilde{\beta} =  1-e^{-1}-\varepsilon - \frac{1}{\nmed}$.
	Using the previous inequalities, we conclude that
	\begin{align}
	\prob{ \left\{  \mathbf{b} \in \mathcal{M}_n:~\auto(\b{}) \mbox{ synchronizes}  \right\}}
	&\label{eq:SynchroReduction2}\stackrel{\text{ \eqref{eq:SynchroMatrixReduction}}}{\geq}
	1- \prob{\cRUstar} - \prob{\mathcal{E}_{\mathrm{\small zero}}^c (\tilde{\beta})}
	\\&\geq
	1 - \nmed \exp\left\{ -\frac{1}{2n}\left(\varepsilon \nmed - 1\right)^2\right\}
	-
	\frac{\var{\D}}{\left(\varepsilon\nmed-1\right)^2}
	\end{align}
	for $n$ large enough where the relations $\alpha^\star,\tilde{\beta}>0$ and $\alpha^\star + \tilde{\beta} >1$ are valid when $\varepsilon \in (0,\frac{1}{2} - e^{-1})$ and $n$ is large enough.
\end{proof}
\noindent Actually, we formulate the following conjecture:
\begin{conj}
	$\var{\D} = O(n).$  \end{conj}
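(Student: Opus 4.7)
The plan is to reduce $\var{\D}$ to a combinatorial sum via Claim~\ref{claim:VD} and then estimate the resulting off-diagonal covariances by exploiting the same ``cycles vs.\ acyclicity'' dichotomy that drove the second-moment analyses of $\Z_0$ and $\Z_1$. Writing $\D = \sum_i \D_i$ with $\D_i = 1 - x_i$, we have
\[
\var{\D} \;=\; \sum_{i=1}^{\nmed} \var{\D_i} \;+\; 2\sum_{1 \leq i < j \leq \nmed} \cov{\D_i,\D_j}.
\]
Since $\D_i \in \{0,1\}$ we have $\var{\D_i} \leq 1/4$, so the diagonal contributes $O(n)$ trivially and the entire question reduces to showing $\sum_{i<j} \cov{\D_i,\D_j} = O(n)$.

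To attack the covariance, I would expand each $x_i = \prod_{e}(1 - y_e)$ by inclusion--exclusion, where $e$ ranges over the edges of $C_n(i)$ and $y_e = \indi{b_u = b_v}$ for $e = \{u,v\}$. Using $\E{\prod_{e \in S} y_e} = n^{c_i(S) - n}$, where $c_i(S)$ is the number of connected components of the spanning subgraph $(\zn, S)$, a direct computation gives the rank-polynomial identity
\[
\cov{x_i,x_j} \;=\; \sum_{S,T} (-1)^{|S|+|T|}\, n^{c_i(S)+c_j(T)-2n}\bigl(n^{\Delta\beta(S,T)}-1\bigr),
\]
where $S$ ranges over edge subsets of $C_n(i)$, $T$ over edge subsets of $C_n(j)$, and $\Delta\beta(S,T) := \beta(S \cup T) - \beta_i(S) - \beta_j(T) \geq 0$ is the number of independent \emph{cross-cycles} of $S\cup T$, that is, cycles using at least one edge of $C_n(i)$ and at least one edge of $C_n(j)$. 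Crucially, any pair $(S,T)$ with $\Delta\beta(S,T) = 0$ contributes exactly zero; this is the natural extension of Proposition~\ref{prop:IndependenceAcyclic} to subsets of arbitrary size.

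The next step is to classify the cross-cycle configurations appearing in the joint circulant graph $C_n(i,j)$, in the spirit of the enumeration in Claim~\ref{claim:Bounds1Isomorphism} but now for subgraphs of arbitrary size. A cross-cycle of length $\ell$ using $a$ edges of $C_n(i)$ and $b$ edges of $C_n(j)$ forces a linear congruence of the form $\pm a\,i \pm b\,j \equiv 0 \pmod n$, which pins down $(i,j)$ up to $O(n)$ choices for each fixed pattern. Combined with the factor $n^{-(|S|+|T|)+\beta_i(S)+\beta_j(T)}(n^{\Delta\beta}-1)$ from the identity above, one expects that short cross-cycles dominate and that each pair $(i,j)$ contributes $O(1/n)$ on average, so summing over the $O(n^2)$ pairs yields the desired $O(n)$ bound.

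The hard part will be controlling the alternating signs in the Whitney expansion: naive triangle-inequality estimates on the absolute terms lose the cancellations and deliver only the trivial $O(n^2)$ bound flagged in Remark~\ref{rmk:PiExplicitPijNPHard}. I see two plausible routes. One is an organised resummation of the chromatic polynomial of $C_n(i,j)$ via the broken-circuit theorem, grouping edge subsets by their cross-cycle pattern and exploiting cancellation inside each group. A potentially cleaner alternative is to sidestep the chromatic formalism and prove $\cov{z_i, z_j} = O(1/n)$ directly for the zero-counts $z_i = \sum_{k \in \zn} y_{i,k}$ by an acyclicity argument analogous to Claim~\ref{claim:SubsetIndependent}, and then use a Chen--Stein coupling to show that $(z_i,z_j)$ converges to a pair of independent Poisson$(1)$ random variables in total variation at rate $O(1/n)$; this would immediately give $\cov{\D_i,\D_j} = \cov{\indi{z_i \geq 1},\indi{z_j \geq 1}} = O(1/n)$ and hence the conjecture.
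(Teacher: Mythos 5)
This statement is labeled in the paper as an open \emph{conjecture}, not a theorem: the authors do not prove it, and in fact explain in the surrounding text and in Remark~\ref{rmk:PiExplicitPijNPHard} that they were unable to estimate $\var{\D}$ directly (which is precisely why $\Z_0$ and $\Z_1$ were introduced as a workaround in the proof of Lemma~\ref{lemma:ConcentrationOfD}). So there is no ``paper's own proof'' to compare against; the relevant benchmark is the single sentence after the conjecture, which reduces it to showing $\big|\tfrac{P_{i,j}(n)}{n^n} - \tfrac{P_i(n)P_j(n)}{n^{2n}}\big| = O(1/n)$ uniformly in $i,j$.

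Your reduction to $\sum_{i<j}\cov{\D_i,\D_j} = O(n)$ is exactly that same sufficient condition, since $\cov{\D_i,\D_j} = \tfrac{P_{i,j}(n)}{n^n} - \tfrac{P_i(n)P_j(n)}{n^{2n}}$ by Claim~\ref{claim:VD}. The Whitney/rank-polynomial identity you write down is correct, and the observation that $(S,T)$ pairs with $\Delta\beta(S,T)=0$ contribute zero is indeed the subgraph-level generalization of Proposition~\ref{prop:IndependenceAcyclic}. However, you acknowledge — and rightly so — that this is where the argument stalls: the cross-cycle terms come with unbounded cycle-rank boosts $n^{\Delta\beta}$ and alternating signs, so the triangle inequality is hopeless and some organized cancellation (broken circuits, a transfer-matrix resummation, or similar) is needed but not supplied. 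That is a genuine gap, not a detail, and it is precisely the obstruction the authors flag via the NP-hardness of computing $P_{i,j}$. On the Chen--Stein alternative: your preliminary step $\cov{z_i,z_j} = O(1/n)$ is actually trivially $\cov{z_i,z_j}=0$ for $i\neq j$, because any two edges of different lengths form an acyclic pair so $y_{i,k}$ and $y_{j,l}$ are independent for all $k,l$. But vanishing covariance of the counts does not control $\cov{\indi{z_i\geq1},\indi{z_j\geq1}}$; what you actually need is a quantitative bound on the total-variation distance between the joint law of $(z_i,z_j)$ and a product Poisson law, uniformly over the $\Theta(n^2)$ pairs, and the dependency-neighborhood computation behind that bivariate Chen--Stein bound is not carried out. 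In short: the plan is sound and consistent with the paper's stated reduction, but both proposed routes leave the crucial estimate unproved, so this is a roadmap rather than a proof of the conjecture.
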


\noindent To prove this conjecture it is sufficient to prove that there is $g:\mathbb{N} \rightarrow \mathbb{R}$ such that $|\frac{P_{i,j}(n)}{n^n} - \frac{P_i(n)P_j(n)}{n^{2n}} | \leq g(n) = O(1/n)$ for all $i,j$. From \eqref{eq:UpperBoundChromaticPoly} we see that $0 \leq P_i(n)/n^n \leq f(n) = O(1)$ for all $i$, therefore the first part of the sum of $\var{\D}$ given in Claim~\ref{claim:VD} is $|\sum_{i=1}^{n} \left( \frac{P_i(n)}{n^n} - \frac{P_{i}^2(n)}{n^{2n}} \right)| \leq n f(n) = O(n)$. The second part of the sum $\sum_{1\leq i<j\leq\nmed} \left(\frac{P_{i,j}(n)}{n^n} - \frac{P_i(n)P_j(n)}{n^{2n}}\right)$ has a quadratic number of elements of the form $\frac{P_{i,j}(n)}{n^n} - \frac{P_i(n)P_j(n)}{n^{2n}}$, and it can be bounded by $O(n^2)g(n) = O(n) $ if the assumption $|\frac{P_{i,j}(n)}{n^n} - \frac{P_i(n)P_j(n)}{n^{2n}}| \leq g(n) = O(1/n)$ for all $i,j$ is true, making   $\var{\D} = O(n) + O(n) = O(n).$ In particular, a positive answer to this chromatic-polynomial question would give an alternative proof of Theorem~\ref{thm:main}.

\section{Future work}\label{sec:FutureWork}
Let $\auto(\mathbf{a},\b{})$ be an automaton where $\mathbf{a}:\zn \rightarrow \zn$ is fixed and  $\b{} \in \mathcal{M}_n$. These are natural lines of research to extend/improve the results in this paper:\\
$\bullet$ We want to explore in more detail the strengths and limitations in the ideas presented in this paper. For example, we think that these ideas can extend  Theorem~\ref{thm:main} to the case where $\mathbf{a}:\zn \rightarrow \zn$ is in the form of a finite number of pairwise disjoint cycles of almost-equal length. We also think that (probabilistic) upper bounds for the length of the synchronizing minimal words can be given with our techniques, in the spirit of the results of \cite{nicaud2014fast}.\\
$\bullet$ Theorem~\ref{thm:Perrin}
has a decay rate in $\Theta\left(\frac{\sqrt{p}}{e^p}\right)$. We believe that this can be extended in a weaker form to the case of circular automata of composite order:

\begin{conj}
	$$\prob{\left\{ \mathbf{b} \in \mathcal{M}_n:~ \auto(\b{}) \text{ synchronizes} \right\}} =
	1 - O(\alpha^{n}),$$	for some $0 < \alpha < 1$, as $n \to \infty$.
\end{conj}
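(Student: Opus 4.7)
The plan is to upgrade the $O(1/n)$ estimate on $\prob{\cRD}$ from Lemma~\ref{lemma:ConcentrationOfD} to an exponential bound, since Lemma~\ref{lemma:LowerBoundRowElements} already delivers $\prob{\cRUstar}=O(ne^{-\Theta(n)})$, which is compatible with the conjectured rate. Since $\D$ has Lipschitz coefficient $\Theta(n)$ (a single $b_j$ intervenes in every row of $\Tb$), McDiarmid's inequality is unavailable, and Chebyshev gives only $O(1/n)$. Instead, I would bypass $\D$ and argue directly about non-synchronization via a structural criterion implicit in the proof of Theorem~\ref{thm:main}: on the vertex set $\{0,1,\ldots,\nmed\}$ define the directed graph $\Gamma_{\mathbf{b}}$ with an edge $d\to d'$ whenever $d'$ appears in the $d$-th row of $\Tb$; then $\auto(\mathbf{b})$ synchronizes if and only if $0$ is reachable from every $d\geq 1$, equivalently if and only if there is no non-empty \emph{trapping set} $S\subseteq\{1,\ldots,\nmed\}$ satisfying ``every entry of every row indexed by $S$ lies in $S$''. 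Sufficiency is immediate; necessity follows by taking $S$ to be the set of distances from which $0$ is unreachable and using the cyclic symmetry of $\mathbf{a}$ to ensure any representative may be chosen at each step.

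Given such an $S$, the trapping event is exactly that $b_{k+d}-b_k\pmod n$ lies in $L_S:=S\cup(-S)$ for all $d\in S$, $k\in \zn$, i.e., $\mathbf{b}$ is an $L_S$-labeling of the circulant graph $C_n(S)$. A spanning-forest argument (free label in $L_S$ per tree edge, with $|L_S|\leq 2|S|$) yields
$$
\prob{S\text{ is trapping}} \;\leq\; \left(\frac{2|S|}{n}\right)^{n-c(S)},
$$
where $c(S)$ is the number of connected components of $C_n(S)$. Combining with the entropy estimate $\binom{\nmed}{s}\leq 2^{\nmed H(s/\nmed)}$, where $H$ is the binary entropy, one obtains an exponentially small contribution from all $S$ with $|S|\leq \gamma\nmed$ provided $\tfrac{1}{2}H(\gamma)+\log_2\gamma<0$; numerically this covers $|S|\lesssim 0.75\,\nmed$.

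The main obstacle is the regime of large trapping sets, $|S|$ close to $\nmed$, where the spanning-forest bound $(2|S|/n)^{n-c(S)}$ degenerates to essentially $1$. Here, however, $C_n(S)$ becomes very dense, with cycle rank of order $n\,|S|$, and each non-tree edge imposes an additional cycle constraint: its label, determined as an integer linear combination of tree labels modulo $n$, must also lie in $L_S$. Heuristically, for generic $S$ these linear combinations equidistribute in $\zn$, so each non-tree constraint contributes an independent factor of $\sim |L_S|/n\leq 2|S|/n$, and multiplying over the $\Theta(n^2)$ non-tree edges yields an exponentially small probability; for the extreme case $|S|=\nmed$ the trapping event reduces to ``$\mathbf{b}$ is a permutation'', giving $n!/n^n=\Theta(\sqrt{n}/e^n)$ in agreement with the conjectured rate. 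Making this cycle-based argument rigorous---quantifying the equidistribution and controlling correlations among the cycle constraints, together with a separate treatment of ``arithmetic'' $S$ (such as $S\subseteq K_m$ for $m\mid n$, whose contribution can alternatively be obtained by the direct count $(n/m)^{(n/m)(1-m)}$)---is the crux of the proof and, I expect, the main technical difficulty.
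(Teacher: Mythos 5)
The statement you chose is a \emph{conjecture} left open in Section~\ref{sec:FutureWork}; the paper only proves the weaker rate $1-O(1/n)$ (Theorem~\ref{thm:main}), so there is no proof in the paper to compare yours against. Your trapping-set reduction is nevertheless correct, and it genuinely sharpens what the paper records: the proof of Theorem~\ref{thm:main} exploits only paths of length at most two in the digraph on $\{0,\dots,\nmed\}$, whereas your equivalence (synchronization $\iff$ $0$ is reachable from every $d\ge 1$ $\iff$ there is no non-empty $S\subseteq\{1,\dots,\nmed\}$ with all entries of all rows indexed by $S$ lying in $S$) is tight, since $\mathbf{a}^l$ lets one choose any representative pair of a given cyclic distance before applying $\mathbf{b}$. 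The spanning-forest bound $\prob{S\text{ trapping}}\le(2|S|/n)^{n-c(S)}$ and the entropy estimate for $|S|\le\gamma\nmed$, $\gamma\lesssim 0.75$, are sound; one should note that $c(S)=\gcd(n,S)\le\nmed/|S|$ (all elements of $S$ are multiples of $\gcd(n,S)$), so the exponent $n-c(S)$ is $n-O(1)$ as soon as $|S|=\Theta(n)$, and the numerology $\tfrac12 H(\gamma)+\log_2\gamma<0$ indeed flips sign near $\gamma\approx 0.75$.

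The gap, which you acknowledge, is the regime $|S|$ close to $\nmed$, and it is real. There the bound $(2|S|/n)^{n-c(S)}$ saturates to $\approx 1$, and the heuristic that each of the $\Theta(n^2)$ non-tree cycle constraints contributes an independent factor $\sim 2|S|/n$ cannot simply be asserted: the non-tree differences are fixed $\mathbb{Z}$-linear combinations of the tree differences modulo $n$, and for $S$ with arithmetic structure (a union of cosets of a subgroup of $\zn$, or ``all but a few residues'') these constraints are highly correlated and do not equidistribute. The extreme case $S=\{1,\dots,\nmed\}$, where the trapping event collapses to ``$\mathbf{b}$ is a permutation'' with probability $n!/n^n$, is encouraging and matches Theorem~\ref{thm:Perrin}, but you give no mechanism for interpolating between that endpoint and the union-bound regime. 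In short: a well-posed reduction and solid control on small trapping sets, but the large-$|S|$ regime is missing, so this does not establish the conjectured $1-O(\alpha^n)$ rate.
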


\section*{Acknowledgments}
CA acknowledges the financial support of the Austrian Science Fund (FWF), projects F-5512, I-3466 and Y-901. DD, AG and AR acknowledge the financial support of the FWF project P29355-N35. AR acknowledges also the partial support of the FWF project P25510-N26. We want to thank two anonymous referees who read our paper very carefully, and whose suggestions greatly helped us to improve the presentation of this paper.

\bibliographystyle{apalike}
\bibliography{References}

\end{document}